\newcommand{\abs}[1]{\ensuremath{\left\lvert #1 \right\rvert}}
\newtheorem{thm}{Theorem}
\newtheorem{lm}{Lemma}
\newtheorem{prop}{Proposition}
\newtheorem{dfn}{Definition}
\newtheorem*{conj}{Conjecture}
\newtheorem{claim}{Claim}
\newtheorem{cor}{Corollary}
\newtheorem*{mainres}{Main result}
\newtheorem*{thm*}{Theorem}
\theoremstyle{definition}
\newtheorem{remark}{Remark}
\author{Ognjen To\v{s}i\'{c} }\address{Mathematical Institute, University of Oxford}
\title{Confirming Brennan's conjecture numerically on a counterexample to Thurston's $K=2$ conjecture}
\date{}
\begin{document}
\begin{abstract}It was shown by Bishop that if Thurston's $K = 2$ conjecture holds for some planar domain, then Brennan's conjecture holds for the Riemann map of that domain as well. In this paper we show numerically that the original counterexample to Thurston's $K=2$ conjecture given by Epstein, Marden and Markovi\'{c} is not a counterexample to Brennan's conjecture. 
\end{abstract}
\maketitle
    \section{Introduction}
    \subsection{Thurston's $K=2$ conjecture}
        Let $\Omega$ be a simply connected proper subdomain of $\mathbb{C}\subset \mathbb{S}^2=\partial\mathbb{H}^3$ where $\mathbb{H}^3$ is the 3-dimensional hyperbolic space. Let $\tilde{\Omega}$ be the union of all hyperbolic half-spaces $H$ such that $H\cap \partial\mathbb{H}^3\subseteq\Omega$. Then we define the dome of $\Omega$ to be $\mathrm{Dome}(\Omega)=\partial\tilde{\Omega}\cap \mathbb{H}^3$. It is known (see the book by Epstein and Marden \cite{em-convex-hulls-sullivan} for a detailed account) that $\mathrm{Dome}(\Omega)$ with the path metric induced from $\mathbb{H}^3$ is isometric to the unit disk $\mathbb{D}$ with its hyperbolic metric. Using this isometry, we give $\mathrm{Dome}(\Omega)$ a conformal structure.
        \par Note that $\Omega$ and $\mathrm{Dome}(\Omega)$ share the common boundary $\partial\Omega$, so we set 
        \begin{align*}
            \mathcal{F}=\left\{f:\Omega\to\mathrm{Dome}(\Omega):f(x)=x\text{ for all }x\in\partial\Omega\text{ and }f\text{ is continuous}\right\}.
        \end{align*}
        Let $\text{M\"ob}(\Omega)$ be the set of all M\"obius transformations that preserve $\Omega$. Each such transformation extends to an isometry of $\mathbb{H}^3$ preserving $\mathrm{Dome}(\Omega)$, so we define
        \begin{align*}
            \mathcal{F}_\text{eq}=\{f\in\mathcal{F}:f\circ\gamma=\gamma\circ f\text{ for all }\gamma\in\text{M\"ob}(\Omega)\}.
        \end{align*}
    \begin{dfn}
        Given a simply connected proper subdomain $\Omega$ of the plane, we define 
        \begin{align*}
            K(\Omega)&=\inf\{K>0:\text{there exists a }K\text{-quasiconformal map }f\in\mathcal{F}\}, \text{ and }\\
            K_\text{eq}(\Omega)&=\inf\{K>0:\text{there exists a }K\text{-quasiconformal map }f\in\mathcal{F}_\text{eq}\}.
        \end{align*}
    \end{dfn}
    It was shown by Sullivan that $\sup_\Omega K(\Omega)<\infty$ in \cite{sullivan-original}. A more detailed proof of this result and a proof that $\sup_\Omega K_\mathrm{eq}(\Omega)<\infty$ was given by Epstein and Marden in \cite{em-convex-hulls-sullivan}. Thurston conjectured the following result. 
    \begin{conj}[Thurston's $K=2$ conjecture]
        For any simply connected proper subdomain $\Omega\subset\mathbb{C}$, we have $\sup_\Omega K(\Omega)=\sup_\Omega K_\mathrm{eq}(\Omega)=2$. 
    \end{conj}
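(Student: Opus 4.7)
The plan is to exhibit a canonical map $f\in\mathcal{F}_\mathrm{eq}$ with quasiconformal distortion at most $2$ for every simply connected $\Omega$; since $\mathcal{F}_\mathrm{eq}\subseteq\mathcal{F}$, such a bound would dominate both suprema, and a matching lower bound already follows by testing on a slit half-plane, where the natural identification with the geodesic above the slit has quasiconformal constant exactly $2$. The candidate I would use is the nearest-point retraction $r\colon\Omega\to\mathrm{Dome}(\Omega)$ of Epstein--Marden, which sends each $z\in\Omega$ to the first tangency point on $\mathrm{Dome}(\Omega)$ of the family of horoballs in $\mathbb{H}^3$ based at $z$. By construction $r$ extends to the identity on $\partial\Omega$, and since it is defined intrinsically from the hyperbolic geometry of $(\mathbb{H}^3,\Omega)$ it automatically commutes with every $\gamma\in\text{M\"ob}(\Omega)$, placing $r$ in $\mathcal{F}_\mathrm{eq}$ for free.

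With the map fixed, the problem reduces to a pointwise distortion estimate. I would parameterise $\mathrm{Dome}(\Omega)$ via the Epstein--Marden developing map as a pleated disk in $\mathbb{H}^3$ carrying a bending lamination and transverse measure, and then compute the differential of $r$ at a regular $z\in\Omega$ in coordinates adapted to the pleating planes supporting the horoball at $r(z)$. The resulting infinitesimal distortion becomes an explicit function of the local bending angles and of the incidence angles between the horoball and those planes, and is monotone in a single geometric parameter once the bending measure is fixed. Bounding this expression uniformly over all admissible configurations and then appealing to the analytic definition of quasiconformality via $\|\mu_r\|_\infty$ would promote the pointwise bound to the required global estimate $K(\Omega)\le 2$.

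The decisive obstacle is the sharpness of the constant. Sullivan's original argument produced a very large universal constant, and successive refinements by Epstein--Marden have steadily lowered it, but no published estimate for $r$ actually reaches $2$. The difficulty is that the natural optimisation appears to be attained in a degenerate limit where the bending measure concentrates on a single leaf of large mass; in that regime the distortion of $r$ at horoballs straddling the bent leaf becomes hard to control, and closing the gap to $2$ would seem to demand either a delicate correction to $r$ supported near such leaves --- while preserving both the identity on $\partial\Omega$ and the equivariance under $\text{M\"ob}(\Omega)$ --- or a genuinely different construction of $f$. If no such correction exists, the argument collapses precisely on the extremal geometry, which then becomes a candidate counterexample. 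This is exactly the situation that motivates the numerical investigation of the Epstein--Marden--Markovi\'c domain carried out in the remainder of this paper, since it is on just such a concentrated bending configuration that the $K=2$ bound is known to fail.
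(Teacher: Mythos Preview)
The statement you are attempting to prove is stated in the paper as a \emph{conjecture}, not a theorem, and the paper does not offer any proof of it. On the contrary, the very next paragraph of the paper records that Epstein, Marden and Markovi\'c constructed counterexamples, first in the equivariant case and then in the general case, and that Komori--Matthews gave a more explicit equivariant counterexample. So there is no ``paper's own proof'' to compare your proposal against: the conjecture is known to be false, and the entire purpose of the paper is to test Brennan's conjecture on one of those counterexamples.

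Your proposal is therefore not a proof and cannot be completed into one. To your credit, you essentially say this yourself in the final paragraph: you identify that the nearest-point retraction $r$ has never been shown to have distortion at most $2$, that the worst case appears to be a bending measure concentrated on a single leaf, and that this is precisely the geometry of the Epstein--Marden--Markovi\'c and Komori--Matthews domains. That diagnosis is correct, and it is not a gap you could close with more care --- the counterexamples show that no equivariant $f$ (not just $r$) achieves $K\le 2$ on those domains. The right framing here is not ``here is a proof strategy with an obstacle'' but ``here is why the natural approach suggested $K=2$, and here is the configuration on which it and the conjecture itself fail.''
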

    Epstein, Marden and Markovi\'{c} constructed counterexamples for this conjecture in \cite{emm-2} for the equivariant case and in \cite{emm-log-spiral} for the general case. Komori and Matthews then constructed in \cite{komori} a more explicit counterexample for the equivariant case building on the ideas of \cite{emm-2}.
    \subsection{Brennan's conjecture}
    We remind the reader that given a simply connected proper subdomain $\Omega\subset\mathbb{C}$, by the Riemann mapping theorem there exists a conformal map $F:\Omega\to\mathbb{D}$. Brennan conjectured the following regarding the growth of $F'$ near $\partial\Omega$. 
    \begin{conj}[Brennan's conjecture]
        Let $\Omega$ be a simply connected proper subdomain of $\mathbb{C}$, and let $F:\Omega\to\mathbb{D}$ be a conformal map. Then $F'\in L^{p}(\Omega)$ for all $\frac{4}{3}<p<4$.
    \end{conj}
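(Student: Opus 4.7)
The plan is to translate the integrability of $F'$ on $\Omega$ into a spectral estimate for the inverse conformal map and then attack the resulting bound via Bloch-function techniques. Setting $G = F^{-1}:\mathbb{D}\to\Omega$ and applying the change of variable $w = F(z)$ yields
\begin{align*}
    \int_\Omega \abs{F'(z)}^p\, dA(z) = \int_\mathbb{D} \abs{G'(w)}^{2-p}\, dA(w),
\end{align*}
so with $t = 2-p$ the conjecture is equivalent to the assertion that $\abs{G'}^t$ is area-integrable on $\mathbb{D}$ for every $t\in(-2,2/3)$ and every univalent $G$ on $\mathbb{D}$. Integrating in polar coordinates, this in turn reduces to the universal integral-means bound
\begin{align*}
    \beta(t) := \sup_{G}\limsup_{r\to 1^-}\frac{\log\int_0^{2\pi}\abs{G'(re^{i\theta})}^t\,d\theta}{\log\frac{1}{1-r}} < 1 \qquad \text{for all } t\in(-2,2/3).
\end{align*}

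For the easier half $t\in[0,2/3)$, corresponding to $p$ near $4/3$, I would invoke Pommerenke's quadratic estimate $\beta(t) = O(t^2)$ as $t\to 0$, together with an area-theorem computation that handles the endpoint $t=2/3$. For the main range $t\in(-2,0)$, the strategy would combine three ingredients: a Whitney decomposition of $\Omega$ that replaces the integral by a weighted dyadic sum over Whitney cubes (with weights comparable to $(\mathrm{diam}\,Q)^{2-p}$ by Koebe distortion); the Bergman-space asymptotic-variance method of Hedenmalm and Shimorin, which yields sharp universal quadratic upper bounds on $\beta(t)$ in a neighbourhood of zero; and holomorphic motions combined with the Becker--Pommerenke variational formula, used to propagate the bound from a neighbourhood of zero down to $t$ approaching $-2$.

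The true obstacle is the critical endpoint $t=-2$, corresponding to $p$ near $4$. Brennan's conjecture asserts the sharp equality $\beta(-2)=1$, whereas every currently known unconditional argument yields only $\beta(-2)\le 1+\delta$ for some strictly positive gap $\delta>0$, the present record being Hedenmalm's heat-flow estimate. Closing this gap is the forty-year-old heart of the problem, and a genuinely new input --- most plausibly from the multifractal analysis of $\mathrm{SLE}_\kappa$ curves, where the integral-means spectrum is conjecturally computable by Coulomb-gas arguments, or from a decisive sharpening of the variance bounds --- appears to be required. Thus while the reformulation above organises the problem cleanly and isolates the critical step, an honest proposal must flag $\beta(-2)=1$ as the obstruction that remains to be overcome, and it is precisely because this step is unavailable in closed form that the present paper turns instead to numerical verification on the specific counterexample of Epstein, Marden and Markovi\'{c}.
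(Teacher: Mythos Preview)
The statement you are asked to prove is labelled in the paper as a \emph{conjecture}, not a theorem, and the paper explicitly says ``Brennan's conjecture in full generality is still open.'' There is therefore no proof in the paper to compare against: the paper neither claims nor attempts a proof of this statement. Its contribution is the numerical verification (the Main result) that the conjecture holds for one particular domain $\Omega$, not a general argument.

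Your proposal is not a proof either, and to your credit you say so clearly: you reduce the problem to the integral-means spectrum bound $\beta(-2)=1$ and then flag this as the unresolved obstruction. That reduction is correct and well known, and your sketch of partial results (Pommerenke's quadratic estimate, Hedenmalm--Shimorin, etc.) is accurate as a survey of the state of the art. But a proof proposal that ends with ``a genuinely new input \ldots\ appears to be required'' is, by its own admission, incomplete. So the honest assessment is that you have written a competent literature summary and problem reformulation, not a proof, and there is no corresponding proof in the paper because the paper never set out to give one.
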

    Brennan's conjecture in full generality is still open.
    \subsection{Bishop's theorem}
    Bishop showed in \cite{bishop-1} that if Thurston's $K=2$ conjecture holds for some domain, then Brennan's conjecture holds for that domain as well.
    \begin{thm}\label{thm:bishop} Let $\Omega$ be a simply connected proper subdomain of the complex plane, and let $F:\Omega\to\mathbb{D}$ be a conformal map. Then $F'\in L^p(\Omega)$ for all $p<\frac{2 K(\Omega)}{K(\Omega)-1}$. In particular if $K(\Omega)=2$, then Brennan's conjecture holds for $\Omega$. 
    \end{thm}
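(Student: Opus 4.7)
First I would use the definition of $K(\Omega)$ to produce, for every $K > K(\Omega)$, a $K$-quasiconformal map $f \in \mathcal{F}$ from $\Omega$ to $\mathrm{Dome}(\Omega)$. Composing with the canonical conformal identification $\Phi : \mathrm{Dome}(\Omega) \to \mathbb{D}$ mentioned in the introduction gives a $K$-quasiconformal map $g := \Phi \circ f : \Omega \to \mathbb{D}$. Since $F : \Omega \to \mathbb{D}$ is conformal, the composition $\phi := g \circ F^{-1} : \mathbb{D} \to \mathbb{D}$ is itself a $K$-quasiconformal self-map of the disk, and this is the natural object to which one can apply the $L^p$-theory of planar quasiconformal maps.

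The plan is then to invoke Astala's sharp area-distortion theorem: for a $K$-quasiconformal self-map $\phi$ of $\mathbb{D}$, one has $|D\phi|^p \in L^1(\mathbb{D})$ (and similarly for $\phi^{-1}$) for every $p < \frac{2K}{K-1}$. Because $F^{-1}$ is conformal, the chain rule gives the pointwise identity $|Dg(z)| = |D\phi(F(z))|\,|F'(z)|$, and via the change of variables $\zeta = F(z)$ (respectively $w = g(z)$ for the inverse) one obtains
\[
\int_{\mathbb{D}} |D\phi|^p \, dA = \int_\Omega |Dg|^p |F'|^{2-p}\, dA, \qquad \int_{\mathbb{D}} |D\phi^{-1}|^p \, dA \asymp \int_\Omega |F'|^p |Dg|^{2-p}\, dA,
\]
with implied constant depending only on $K$. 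Using the pointwise relation $|Dg|^2 \asymp J_g$ (valid up to a factor of $K$ for any $K$-qc map) and splitting $\Omega$ into regions according to the size of $|Dg|$, I would absorb the weight $|Dg|^{2-p}$ and extract the unweighted bound $\int_\Omega |F'|^p \, dA < \infty$. Letting $K \downarrow K(\Omega)$ gives the claimed range of $p$, and plugging in $K = 2$ yields $p < 4$, which is Brennan's conjecture.

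The hard part will be recovering the \emph{sharp} exponent $\frac{2K}{K-1}$ rather than something strictly smaller. A naive H\"older inequality applied to the two weighted identities above only delivers the range $p < \frac{2K^2}{K^2-1}$, and closing this gap is the technical heart of Bishop's argument: it requires the full strength of Astala's theorem (equivalently, the sharp higher integrability $J_\phi \in L^q_{\mathrm{loc}}$ for every $q < \frac{K}{K-1}$), together with a careful joint use of the direct and the inverse $L^p$ estimates for $\phi$ so that the two weighted bounds combine without loss in the exponent.
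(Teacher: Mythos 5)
The paper does not contain a proof of Theorem~\ref{thm:bishop}; it is quoted from \cite{bishop-1} and used as a black box, so there is no in-paper argument to compare against. Evaluating your proposal on its own merits, the setup is right but the decisive step has a genuine gap.

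Your two change-of-variable identities are correct, and Astala's area-distortion theorem applied to the $K$-quasiconformal self-map $\phi^{-1}$ of $\mathbb{D}$ does give $\int_\Omega |F'|^p\,|Dg|^{2-p}\,dA < \infty$ for $p<\tfrac{2K}{K-1}$. But from here the argument cannot be closed using only the quasiconformality of $\phi$. Both of your weighted bounds hold for \emph{every} $K$-quasiconformal $g:\Omega\to\mathbb{D}$; in particular for $g$ conformal ($K=1$), where $\phi$ is a M\"obius transformation and the identities collapse to $\int_\Omega|F'|^2\,dA=\pi$, from which nothing about $F'\in L^p$ with $p>2$ can follow. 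Concretely, for $p>2$ the weight $|Dg|^{2-p}$ majorizes $1$ only on the set $\{|Dg|\le 1\}$; on $\{|Dg|>1\}$ the inequality points the wrong way, and ``splitting by the size of $|Dg|$'' produces a dyadic sum $\sum_k 2^{k(p-2)}\int_{\{|Dg|\sim 2^k\}}|F'|^p|Dg|^{2-p}\,dA$ whose convergence is not guaranteed by the integrability of $|F'|^p|Dg|^{2-p}$ alone.

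What is missing is the geometric input that gives Bishop's paper its title and is the actual substance of the theorem: the map $g=\iota^{-1}\circ f$, where $\iota:\mathbb{D}\to\mathrm{Dome}(\Omega)$ is the hyperbolic isometry and $f\in\mathcal{F}$ is the $K$-quasiconformal map equal to the identity on $\partial\Omega$, is not an arbitrary quasiconformal map. Using the quantitative form of Sullivan's convex hull theorem (Lipschitz control of the nearest-point retraction to the dome), it can be taken to be Euclidean-Lipschitz, $|Dg|\le M$ a.e.\ with $M$ universal. Once this is available the weight is absorbed in one line: for $p>2$ one has $|Dg|^{2-p}\ge M^{2-p}$ pointwise, so
\begin{align*}
\int_\Omega|F'|^p\,dA\le M^{p-2}\int_\Omega|F'|^p|Dg|^{2-p}\,dA<\infty\qquad\text{for }p<\tfrac{2K}{K-1},
\end{align*}
and the sharp exponent appears directly from Astala with no further loss. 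So the difficulty you single out in your last paragraph --- ``recovering the sharp exponent $\tfrac{2K}{K-1}$ rather than something strictly smaller'' --- is not where the obstruction lies; the exponent comes for free once the Lipschitz bound on $g$ is in place. The real work, entirely absent from your outline, is deducing that Lipschitz bound from the hyperbolic geometry of $\mathrm{Dome}(\Omega)$.
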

    It was shown by Epstein, Marden and Markovi\'{c} in \cite{emm-5} that Thurston's $K=2$ conjecture holds for domains in $\mathbb{C}$ that are convex (in the Euclidean sense), by Theorem \ref{thm:bishop} therefore proving Brennan's conjecture in this case. Bishop showed in \cite{bishop-sullivan} that $K(\Omega)\leq 7.82$ for all simply connected planar domains $\Omega$, therefore giving a new proof that $F'\in L^{2.29}(\Omega)$ for conformal maps $F:\Omega\to\mathbb{D}$.
    \subsection{Our results}\label{subsec:results}
    By Theorem \ref{thm:bishop}, any counterexample to Brennan's conjecture would also be a counterexample to Thurston's $K=2$ conjecture. In this paper we investigate if Brennan's conjecture holds for a particular counterexample to Thurston's $K=2$ conjecture.
    \par In this section we denote the counterexample to Thurston's $K=2$ conjecture from \cite{komori} by $\Omega$. In \cite{komori} it was shown that $K(\Omega)=K_\mathrm{eq}(\Omega)>2$. We show numerically that Brennan's conjecture holds for $\Omega$. 
    \begin{mainres}
        Let $F:\Omega\to\mathbb{D}$ be a conformal map and set $p_\star=\sup\{p>0:F'\in L^p(\Omega)\}$. We show numerically that $5.52<p_\star<5.54$. In particular $p_\star>4$ so Brennan's conjecture holds for $\Omega$.
    \end{mainres}
    The domain $\Omega$ is a connected component of the domain of discontinuity of an explicit Kleinian once-punctured torus group. Our results strongly suggest that Brennan's conjecture holds for all domains constructed in this way. Equivalently, our results suggest that Brennan's conjecture holds for all quasidisks invariant under a group of M\"obius transformations, such that the quotient is a once-punctured torus. 
    \subsection{General strategy}\label{subsec:gen-strat}
    The domain $\Omega$ has an action by a Kleinian once-punctured torus group $\Gamma$. Write $F:\Omega\to\mathbb{D}$ for its Riemann mapping and $f=F^{-1}$ for its inverse. Let $\Gamma_0=F\circ\Gamma\circ F^{-1}$ be the once-punctured torus Fuchsian group obtained by conjugating $\Gamma$ by $F$, and let $\rho:\Gamma_0\to\Gamma$ be the induced isomorphism. Let $\Phi$ be a fundamental domain for the action of $\Gamma_0$ on $\mathbb{D}$. Then $f(\Phi)$ is a fundamental domain for the action of $\Gamma$ on $\Omega$.
    \par Since $\Omega$ and $\Gamma$ are known, we can numerically compute the Riemann map $F:\Omega\to\mathbb{D}$ using Schwarz-Christoffel mappings and a polygonal approximation to $\Omega$. The quality of this estimate deteriorates near the boundary $\partial\Omega$, making it imprecise to check if $F'\in L^p(\Omega)$ directly. However for $\gamma\in\Gamma$, the behavior of $F'$ on $\gamma f(\Phi)$ is controlled by $\gamma$ and $\rho^{-1}(\gamma)$. Therefore $F'\in L^p(\Omega)$ if and only if a certain series over $\Gamma_0$ depending on $p, \Gamma_0$ and $\rho$ converges.
    \par We note that the estimates of $F$ have higher accuracy away from $\partial\Omega$. We can hence still use them to reliably estimate $\rho$ and $\Gamma_0$. Then we use these estimates to check numerically if the series mentioned in the previous paragraph converges.
    \subsection{A more detailed outline of the argument and the computation}\label{subsec:outline}
     The paper is divided into a theoretical section \S\ref{section:theory}, a brief section where we describe $\Omega$ and $\Gamma$ in more detail \S\ref{section:grafting}, and a numerical methods and results section \S\ref{section:methods}.
    \par In the theory part, we show that $\int_\Omega\abs{F'}^p dxdy=\int_{\mathbb{D}}\abs{f'}^{2-p}dxdy$ is equal to a certain infinite series depending on $\Gamma_0$ and $\rho$, up to a bounded multiplicative error. We do this by decomposing $\mathbb{D}$ into $\gamma\Phi$, where $\Phi$ is a fundamental domain for the action of $\Gamma_0$ on $\mathbb{D}$. We express the integral $\int_{\gamma\Phi}\abs{f'}^{2-p}dxdy$ in terms of $\gamma$ and $\rho(\gamma)$, up to a bounded multiplicative error. We work in greater generality, considering Riemann maps $f:\mathbb{D}\to\Omega$ that conjugate a Fuchsian group $\Gamma$ to a Kleinian group. We do this in \S\ref{section:theory}, where the main result is Theorem \ref{thm:main-estimate}. 
    \par In the proof of Theorem \ref{thm:main-estimate}, issues arise since $\Phi$ is not assumed to be compact. This is handled by showing that on a horoball $H$, the derivative $\abs{f'}$ achieves its minimum at the closest point of $H$ to the origin, up to a bounded multiplicative error. This is Lemma \ref{lm:f'-estimate} and is the most involved part of the proof of Theorem \ref{thm:main-estimate}.
    \par We now briefly describe how $\Omega$ is constructed in \cite{komori}. They start with a hyperplane in $\mathbb{H}^3$ along with a Fuchsian once-punctured torus group that preserves it. Fix a hyperbolic element in this group, and consider the orbit of its axis. This is a discrete set of geodesics, along which they bend the hyperplane. The resulting pleated plane intersects the boundary of $\mathbb{H}^3$ in a curve that bounds $\Omega$. We give an equivalent form of their construction in \S\ref{section:grafting} that does the bending in the complex plane, without mentioning $\mathbb{H}^3$.
    \par This construction makes it easy to identify a point on the boundary of $\Omega$, and to see that the orbit of any point on $\partial\Omega$ is dense in $\partial\Omega$. We use this observation to construct finite polygons that approximate $\Omega$. These approximations are described in \S\ref{subsec:polygonal}.
    \par We use Schwarz-Christoffel mappings to numerically compute an approximation to the Riemann map $F:\Omega\to\mathbb{D}$. This approximation is used to (numerically) compute the generators of $\Gamma_0$. This is explained in \S\ref{subsec:est-conj-gen}.
    \par We use estimates of $\Gamma_0$ and $\rho$ to compute initial terms of the sum from Theorem \ref{thm:main-estimate}. The terms appear to decay exponentially for $p<5.52$ and to increase exponentially for $p>5.54$. From this we conclude the Main result. This is done in \S\ref{subsec:vals}.
    \subsection*{Acknowledgments}
    I would like to thank Vladimir Markovi\'c for introducing me to this problem and for his advice while working on it, and in particular for his help on Lemma \ref{lm:f'-estimate}. I would also like to thank the reviewers for pointing out an error in an earlier draft of this paper, and for their helpful comments and suggestions. 
    \section{Theoretical results}\label{section:theory}
    Throughout this section, we let $\Gamma$ be a Fuchsian group such that $\mathbb{D}/\Gamma$ is a finite area hyperbolic surface. Let $f:\mathbb{D}\to\Omega$ be a conformal map, normalized so that $f(0)=0$ and $f'(0)=1$. Suppose that $f$ conjugates $\Gamma$ to a Kleinian group, and let $\rho:\Gamma\to\mathrm{PSL}(2,\mathbb{C})$ be the induced homomorphism. Our main theoretical result is the following estimate. 
    \begin{thm}\label{thm:main-estimate}
        Given $q>0$, there exists a constant $C=C(\Gamma, q)$ that depends only on $\Gamma$ and $q$ such that 
        \begin{align*}
            \frac{1}{C}\sum_{\gamma\in\Gamma}\frac{\abs{\gamma'(0)}^{q+2}}{\abs{\rho(\gamma)'(0)}^{q}}\leq \int_\mathbb{D} \frac{1}{\abs{f'(z)}^q}dxdy\leq C\sum_{\gamma\in\Gamma} \frac{\abs{\gamma'(0)}^{q+2}}{\abs{\rho(\gamma)'(0)}^q}.
        \end{align*}
    \end{thm}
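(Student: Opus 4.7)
The approach is to tile the disk with $\Gamma$-translates of a fundamental domain, apply the equivariance in each tile, and then show that the resulting local integrals match the summands of the target series. Concretely, let $\Phi$ be the Dirichlet fundamental domain for the action of $\Gamma$ on $\mathbb{D}$ centred at $0$, so that $\mathbb{D}=\bigsqcup_{\gamma\in\Gamma}\gamma\Phi$ up to a set of measure zero. Differentiating the conjugation relation $f\circ\gamma=\rho(\gamma)\circ f$ yields $\abs{f'(\gamma w)}=\abs{\rho(\gamma)'(f(w))}\,\abs{f'(w)}/\abs{\gamma'(w)}$, and substituting $z=\gamma w$ in each tile gives
\begin{align*}
\int_{\mathbb{D}}\frac{dx\,dy}{\abs{f'(z)}^{q}}=\sum_{\gamma\in\Gamma}\int_{\Phi}\frac{\abs{\gamma'(w)}^{q+2}}{\abs{\rho(\gamma)'(f(w))}^{q}\abs{f'(w)}^{q}}\,du\,dv.
\end{align*}
It therefore suffices to show that each $\gamma$-summand on the right is comparable to $\abs{\gamma'(0)}^{q+2}/\abs{\rho(\gamma)'(0)}^{q}$, uniformly in $\gamma$.

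On any fixed compact subset $K\subset\Phi$ of positive measure containing $0$, two standard distortion estimates close the argument: Möbius distortion gives $\abs{\gamma'(w)}\asymp\abs{\gamma'(0)}$ on $K$, and Koebe's theorem applied to the univalent map $\rho(\gamma)\circ f\colon\mathbb{D}\to\Omega$ (noting $(\rho(\gamma)\circ f)'(0)=\rho(\gamma)'(0)$ by the normalisation $f(0)=0$, $f'(0)=1$) gives $\abs{\rho(\gamma)'(f(w))}\abs{f'(w)}\asymp\abs{\rho(\gamma)'(0)}$ on $K$. The implicit constants depend only on $K$, hence only on $\Gamma$. Integrating over $K$ already yields the lower bound stated in the theorem.

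The upper bound requires controlling the contribution of $\Phi\setminus K$. Because $\mathbb{D}/\Gamma$ has finite area, one can arrange for $K$ so that $\Phi\setminus K$ is a finite disjoint union of cuspidal pieces $\Phi\cap H_{i}$, each contained in a horoball $H_{i}$ at a parabolic fixed point $p_{i}\in\partial\mathbb{D}$ and of finite Euclidean area. On such a horoball the standard Koebe argument fails because $H_{i}$ is not compactly contained in $\mathbb{D}$, and this is where Lemma \ref{lm:f'-estimate} enters: it bounds $\abs{f'(w)}$ from below on $H_{i}$ by a uniform constant times $\abs{f'(z_{i}^{*})}$, where $z_{i}^{*}\in H_{i}$ is the point closest to $0$. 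Combined with analogous distortion estimates for $\gamma$ and $\rho(\gamma)\circ f$ on $H_{i}$ (which reduce, via the $\eta$-invariance of $H_{i}$ and a single fundamental strip for the parabolic generator $\eta$ of $\mathrm{Stab}_{\Gamma}(p_{i})$, to estimates on a compact hyperbolic region), this controls the integrand on $\Phi\cap H_{i}$, and integrating against the finite Euclidean area of $\Phi\cap H_{i}$ gives the upper bound.

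The main obstacle is Lemma \ref{lm:f'-estimate}. A priori $\abs{f'}$ can oscillate substantially on a horoball, and the uniform control claimed by the lemma is subtle. The natural route is to use the equivariance $f\circ\eta=\rho(\eta)\circ f$ to make $\abs{f'}$ quasi-periodic along the horocyclic direction on $H_{i}$, so that controlling $\abs{f'}$ on the whole horoball reduces to controlling it on a single fundamental strip for $\eta$ of bounded hyperbolic diameter, where a distortion or normal-families argument can close the estimate.
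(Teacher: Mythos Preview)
Your treatment of the compact part $K\subset\Phi$ and the lower bound are correct and match the paper's Proposition~\ref{prop:compact-piece}. The genuine gap is in the cusp upper bound: the term-by-term comparability you assert, namely that for every $\gamma$ the integral of $\abs{\gamma'(w)}^{q+2}/\abs{(\rho(\gamma)\circ f)'(w)}^{q}$ over $\Phi\cap H_i$ is $\lesssim \abs{\gamma'(0)}^{q+2}/\abs{\rho(\gamma)'(0)}^{q}$, is \emph{false}. Take $\gamma=\eta^{k}$ for the parabolic generator $\eta$ at $p_i$. In upper half-plane coordinates with $p_i\mapsto\infty$ and $\eta\colon z\mapsto z+1$, the strip $\Phi\cap H_i$ is $\{|x|<\tfrac12,\ y>1\}$, and one computes $\abs{\gamma'(w)}\asymp Y^{2}/(k^{2}+Y^{2})$ at height $Y$, while $\abs{\gamma'(0)}\asymp k^{-2}$. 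Thus $\abs{\gamma'(w)}/\abs{\gamma'(0)}$ reaches order $k^{2}$ deep in the cusp; after inserting the $\abs{f'}$-factor and integrating against the Euclidean area of the strip, the cusp contribution for this $\gamma$ exceeds the $\gamma$-summand by a factor $\asymp k$. The source of the error is your sentence that the fundamental strip for $\eta$ has ``bounded hyperbolic diameter'': it has bounded horocyclic width but \emph{infinite} hyperbolic diameter, so neither M\"obius distortion for $\gamma$ nor Koebe for $\rho(\gamma)\circ f$ reduces to a compact region there.

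The paper circumvents this by \emph{not} matching cusp pieces to individual $\gamma$'s. Instead it forms the full $\Gamma$-invariant horoball family $\mathcal H$ and, for each horoball $H\subset\mathcal H$ with anchor $z_0$, applies Lemma~\ref{lm:f'-estimate} to $f$ itself to get $\int_H\abs{f'}^{-q}\lesssim (1-\abs{z_0}^2)^2\abs{f'(z_0)}^{-q}$ (Corollary~\ref{cor:estimate-horoball-integral}). The anchor $z_0$ lies in some $\delta\Phi^*$, so this bound is $\approx\int_{\delta\Phi^*}\abs{f'}^{-q}$; since each $\delta\Phi^*$ meets only boundedly many horoballs, summing gives $\int_{\mathcal H}\abs{f'}^{-q}\lesssim\int_{\mathbb{D}\setminus\mathcal H}\abs{f'}^{-q}$. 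In your variables this amounts to charging \emph{all} the cusp pieces $\gamma(\Phi\cap H_i)$ with $\gamma$ in a fixed coset $\delta\langle\eta\rangle$ to the single summand for $\delta$, rather than to each $\gamma$ separately. Your sketch for Lemma~\ref{lm:f'-estimate} itself has the same issue: horocyclic quasi-periodicity controls the horizontal direction, but the vertical (radial) direction in the strip is unbounded, and that is exactly where the work lies---the paper handles it by passing to the quotient $\mathbb{D}\setminus\{0\}\cong\mathbb{H}/\langle\eta\rangle$ and applying Koebe-type bounds near the puncture (Proposition~\ref{prop:koebe-parabolic}).
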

    We now outline the proof of Theorem \ref{thm:main-estimate}. The idea is to estimate the integral separately over $\mathcal{H}$ and $\mathbb{D}\setminus\mathcal{H}$, where $\mathcal{H}$ is the union of a certain $\Gamma$-invariant collection of horoballs based at fixed points of the parabolics in $\Gamma$. The proof consists of three steps.
    \begin{enumerate}
        \item For any horoball $H$ in $\mathcal{H}$, we let $z_0$ be the closest point of $H$ to the origin. We show the inequality $\abs{f'(z)}\geq C\abs{f'(z_0)}$ for $z\in H$, for some universal constant $C$. We will later use this to show that the integral from Theorem \ref{thm:main-estimate} over $\mathcal{H}$ is dominated by the integral over $\mathbb{D}\setminus\mathcal{H}$. \par We show $\abs{f'(z)}\geq C\abs{f'(z_0)}$ using the fact that $f$ conjugates a parabolic element preserving $H$ to a parabolic M\"obius transformation. The core of the proof of this bound are estimates on the growth and derivative of a univalent map $g:\mathbb{H}\to\mathbb{C}$ satisfying $g(z+1)=g(z)+1$.
        \par We show the necessary bounds on $g$ as Proposition \ref{prop:koebe-parabolic} in \S\ref{subsection:generic-univalent}. Then we derive $\abs{f'(z)}\geq C\abs{f'(z_0)}$ in Lemma \ref{lm:f'-estimate} in \S\ref{subseciton:derivative-bound}.
        \par In Corollary \ref{cor:estimate-horoball-integral}, \S\ref{subseciton:derivative-bound}, using $\abs{f'(z)}\geq C\abs{f'(z_0)}$ we bound the integral of $\int_H\abs{f'(z)}^{-q}dxdy$ in terms of $\abs{f'(z_0)}$ and $1-\abs{z_0}$.
        \item In \S\ref{subsection:division} we define $\mathcal{H}$ and construct a compact fundamental domain $\Phi^*$ for the action of $\Gamma$ on $\mathbb{D}\setminus\mathcal{H}$. We show an estimate on $\int_{\gamma\Phi^*} \abs{f'(z)}^{-q}dxdy$ as Proposition \ref{prop:compact-piece} in \S\ref{subsection:compact-piece}, making essential use of the compactness of $\Phi^*$. This estimate is in terms of the Euclidean distance of $\gamma\Phi^*$ to the boundary $\partial\mathbb{D}$ of the unit disk $\mathbb{D}$, and the absolute value of the derivative $\abs{f'(\gamma(0))}$.
        \item By equivariance of $f$, the derivative $f'(\gamma(z))$ is related to $\rho(\gamma)'(f(z))$ and $\gamma'(z)$. This allows us to replace $\abs{f'}$ from the estimates in the first two steps with the derivatives $\abs{\rho(\gamma)'}$ and $\abs{\gamma'}$. For hyperbolic isometries $\gamma$ of the disk, we relate $\abs{\gamma(0)}$ and $\abs{\gamma'(0)}$, so we also replace $\abs{\gamma(0)}$ with $\abs{\gamma'(0)}$ in the estimates in the first two steps. This is done in \S\ref{subsection:pf-of-thm}, and concludes the proof of Theorem \ref{thm:main-estimate}.
    \end{enumerate}
    \subsection*{Notation and conventions}
    We write $X\lesssim_{Z,W,...} Y$ if there exists a constant $C=C(Z,W,...)>0$ that depends only on the variables in the subscript, so that $X\leq CY$. We analogously write $X\gtrsim_{Z,W,...}Y$ if $Y\lesssim_{Z,W,...} X$ and $X\approx_{Z,W,...}Y$ if $X\lesssim_{Z,W,...} Y$ and $Y\lesssim_{Z,W,...} X$.
    \par We will use hyperbolic geometry in several places in this paper. We always use the metric of constant curvature $-1$. In particular on the unit disk $\mathbb{D}$ we have the metric $\frac{4\abs{dz}^2}{\left(1-\abs{z}^2\right)^2}$, and on the upper half-plane $\mathbb{H}$ we have the metric $\frac{\abs{dz}^2}{\mathrm{Im}(z)^2}$. Whenever we write $\mathrm{dist}(\cdot, \cdot)$, we are referring to the distance coming from the hyperbolic metric on either $\mathbb{D}$ or $\mathbb{H}$.
    \subsection{Univalent maps and parabolic isometries}\label{subsection:generic-univalent}
    The main result of this subsection is concerned with the growth of univalent maps $g$ defined on the upper half-plane $\mathbb{H}$ that commute with the parabolic $\gamma$ given by $\gamma(z)=z+1$. By taking quotients $\mathbb{H}/\langle\gamma\rangle$ and $\mathbb{C}/\langle\gamma\rangle$, $g$ descends to a univalent map $h:\mathbb{D}\setminus\{0\}\to\mathbb{C}$. The estimates we show on $g$ come from the inequalities on $h$ and its derivatives near $0$, and follow from the general theory of univalent maps. 
    \par In \S\ref{subsub:general-univalent} we recall some general results from the theory of univalent maps. The main result there is Claim \ref{claim:de-branges}. Then in \S\ref{subsub:prop-koebe-parabolic} we state and prove the main result of this subsection.
    \subsubsection{General results on univalent maps}\label{subsub:general-univalent}
    We recall some general theorems about univalent maps, that we later use in \S\ref{subsub:prop-koebe-parabolic}. We then show an estimate on how closely a univalent map $h$ follows its linear approximation $h(0)+h'(0)z$ near $0$. This is Claim \ref{claim:de-branges}.
    \begin{thm*}[Koebe quarter theorem]
        Let $h:\mathbb{D}\to\mathbb{C}$ be a univalent function with $h(0)=0$ and $h'(0)=1$. Then $h(\mathbb{D})$ contains the disk of radius $\frac{1}{4}$ around $0$.
    \end{thm*}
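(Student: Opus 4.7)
The plan is to prove the Koebe quarter theorem via Bieberbach's coefficient bound $|a_2|\leq 2$, which in turn rests on the classical area theorem. First I would recall the area theorem: if
\begin{align*}
g(z) = z + b_0 + \sum_{n\geq 1} b_n z^{-n}
\end{align*}
is univalent on $\{|z|>1\}$, then $\sum_{n\geq 1} n|b_n|^2 \leq 1$. This is proved by computing, via Green's theorem, the Lebesgue area of the complement in $\{|w|\leq R\}$ of the image of $\{1<|z|<r\}$ under $g$, expressing it as an explicit series in the coefficients, and observing that the area is nonnegative; letting $r\to 1$ yields the bound.

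Second, I would extract Bieberbach's inequality for the class $S$ of univalent $h:\mathbb{D}\to\mathbb{C}$ with $h(z) = z+a_2 z^2 + a_3 z^3+\cdots$. The key trick is to note that because $h$ is univalent with a simple zero at $0$, the function $h(z^2)/z^2$ is nonvanishing on $\mathbb{D}$ and admits a holomorphic square root $\phi(z) = z + \tfrac{a_2}{2}z^3+\cdots$, which is easily checked to be odd and univalent. Then $G(z) := 1/\phi(1/z) = z - \tfrac{a_2}{2}z^{-1} + \cdots$ is univalent on $\{|z|>1\}$, so the area theorem applied to $G$ gives $|a_2/2|^2\leq 1$, i.e.\ $|a_2|\leq 2$.

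Third, I would derive the quarter theorem by a M\"obius trick. Suppose $w\notin h(\mathbb{D})$; set
\begin{align*}
\tilde h(z) = \frac{w\, h(z)}{w - h(z)}.
\end{align*}
Since $\zeta\mapsto w\zeta/(w-\zeta)$ is a M\"obius map sending the omitted value $w$ to $\infty$, the function $\tilde h$ is univalent on $\mathbb{D}$ with $\tilde h(0)=0$, $\tilde h'(0)=1$, and expanding the geometric series gives $\tilde h(z) = z + \bigl(a_2 + \tfrac{1}{w}\bigr)z^2 + \cdots$. Applying the bound $|a_2|\leq 2$ to both $h$ and $\tilde h$ and using the triangle inequality yields $|1/w|\leq |a_2+1/w|+|a_2|\leq 4$, hence $|w|\geq 1/4$. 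Therefore every point of the open disk of radius $1/4$ about $0$ must lie in $h(\mathbb{D})$.

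The main obstacle is the passage through the area theorem and the square-root transformation to obtain $|a_2|\leq 2$; once this sharp coefficient bound is in hand, the M\"obius omission argument is short and purely algebraic. The conclusion is sharp, as seen by the Koebe function $z/(1-z)^2$, which omits the value $-1/4$.
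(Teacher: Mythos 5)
The paper states the Koebe quarter theorem as a classical background result and does not prove it, so there is no ``paper's own proof'' to compare against. Your argument is the standard textbook proof: the area theorem $\sum n|b_n|^2\leq 1$ for univalent $g(z)=z+b_0+\sum_{n\geq 1}b_n z^{-n}$ on $\{|z|>1\}$, transferred via the odd square-root function $\phi$ with $\phi(z)^2=h(z^2)$ to give Bieberbach's bound $|a_2|\leq 2$, followed by the M\"obius omission trick applied to $\tilde h=wh/(w-h)$ to force $|1/w|\leq 4$ for any omitted value $w$. Each step is correct: $\phi$ is genuinely odd and univalent because $\phi(z_1)=\phi(z_2)$ forces $z_1^2=z_2^2$ and the odd parity rules out $z_1=-z_2\neq 0$; the coefficient of $z^{-1}$ in $1/\phi(1/z)$ is $-a_2/2$ as you claim; $\tilde h$ has $\tilde h(0)=0$, $\tilde h'(0)=1$, and second coefficient $a_2+1/w$; and the triangle inequality closes the argument. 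One small point worth making explicit: the conclusion you obtain is that the \emph{open} disk of radius $1/4$ lies in $h(\mathbb{D})$, which is the correct and sharp statement (the Koebe function $z/(1-z)^2$ omits $-1/4$ itself), and this matches how the paper actually uses the theorem in Proposition \ref{prop:koebe-parabolic}. So your proof supplies a complete justification for a result the paper takes for granted.
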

    \begin{thm*}[Koebe distortion theorem]
        Let $h:\mathbb{D}\to\mathbb{C}$ be a univalent function with $h(0)=0$ and $h'(0)=1$. Then 
        \begin{gather*}
            \frac{\abs{z}}{(1+\abs{z})^2}\leq \abs{h(z)}\leq \frac{\abs{z}}{(1-\abs{z})^2},\\
            \frac{1-\abs{z}}{(1+\abs{z})^3}\leq\abs{h'(z)}\leq \frac{1+\abs{z}}{(1-\abs{z})^3},\\
            \frac{1-\abs{z}}{1+\abs{z}}\leq\abs{z\frac{h'(z)}{h(z)}}\leq \frac{1+\abs{z}}{1-\abs{z}}.
        \end{gather*}
        Moreover, the second inequality implies that for any univalent function $f:\mathbb{D}\to\mathbb{C}$, we have for $z,w\in\mathbb{D}$,
        \begin{align*}
            \frac{\abs{f'(z)}(1-\abs{z}^2)}{\abs{f'(w)}(1-\abs{w}^2)}\leq e^{2\mathrm{dist}(z,w)}.
        \end{align*}
    \end{thm*}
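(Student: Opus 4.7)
The plan is to derive both halves of the theorem from the Bieberbach coefficient bound $\abs{a_2}\leq 2$ for a univalent function $z+a_2z^2+\cdots$ on $\mathbb{D}$, applied to an automorphism-renormalization of $h$.

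First, fix $z\in\mathbb{D}$, let $\psi_z(w)=(w+z)/(1+\bar z w)$ be the disk automorphism with $\psi_z(0)=z$, and form
\[
H(w)=\frac{h(\psi_z(w))-h(z)}{(1-\abs{z}^2)\,h'(z)}.
\]
Then $H$ is univalent with $H(0)=0$ and $H'(0)=1$. Using $\psi_z'(0)=1-\abs{z}^2$ and $\psi_z''(0)=-2\bar z(1-\abs{z}^2)$, a short calculation gives $H''(0)/2=(1-\abs{z}^2)h''(z)/(2h'(z))-\bar z$, so the Bieberbach bound yields
\[
\left|\frac{(1-\abs{z}^2)h''(z)}{h'(z)}-2\bar z\right|\leq 4 \quad\text{for all }z\in\mathbb{D}.
\]
Writing $z=re^{i\theta}$ and using $h''(z)/h'(z)=e^{-i\theta}\partial_r\log h'(re^{i\theta})$ together with $\bar z=re^{-i\theta}$, the above rearranges to $\abs{(1-r^2)\partial_r\log h'(re^{i\theta})-2r}\leq 4$, and taking real parts produces the differential inequality
\[
\frac{2r-4}{1-r^2}\;\leq\;\partial_r\log\abs{h'(re^{i\theta})}\;\leq\;\frac{2r+4}{1-r^2}.
\]
Integrating in $r$ from $0$ to $\abs{z}$ (with partial fractions $(2r\pm 4)/(1-r^2)$) gives the $\abs{h'}$-inequality of the theorem. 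The upper bound on $\abs{h(z)}$ then follows by integrating $h'$ along the radius from $0$ to $z$; the lower bound follows from the Koebe quarter theorem applied to the renormalization $H$, or equivalently via the classical composition $w\mapsto 1/h(1/w)$. The middle inequality on $\abs{zh'(z)/h(z)}$ is a short consequence of the other two.

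For the moreover clause, fix any univalent $f:\mathbb{D}\to\mathbb{C}$ and $z,w\in\mathbb{D}$. Set $w'=\psi_z^{-1}(w)$, so $\mathrm{dist}(z,w)=\mathrm{dist}(0,w')=\log\frac{1+\abs{w'}}{1-\abs{w'}}$. Putting $g=f\circ\psi_z$, the chain rule combined with the fact that $\psi_z$ is a hyperbolic isometry (i.e.\ $\abs{\psi_z'(u)}(1-\abs{u}^2)=1-\abs{\psi_z(u)}^2$) gives
\[
\abs{g'(0)}=\abs{f'(z)}(1-\abs{z}^2),\qquad \abs{g'(w')}(1-\abs{w'}^2)=\abs{f'(w)}(1-\abs{w}^2).
\]
Applying the lower half of the $\abs{h'}$-inequality, normalized, to $g$ at the point $w'$ yields $\abs{g'(w')}/\abs{g'(0)}\geq(1-\abs{w'})/(1+\abs{w'})^3$, and substituting gives
\[
\frac{\abs{f'(z)}(1-\abs{z}^2)}{\abs{f'(w)}(1-\abs{w}^2)}\;\leq\;\frac{(1+\abs{w'})^2}{(1-\abs{w'})^2}\;=\;e^{2\,\mathrm{dist}(z,w)},
\]
which is the asserted bound.

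The approach is entirely classical, so there is no genuine obstacle. The one computational point requiring care is the passage from the complex inequality on $(1-\abs{z}^2)h''/h'-2\bar z$ to the real differential inequality: one must work in radial coordinates $z=re^{i\theta}$ so that the phase $e^{-i\theta}$ factors out as a unit modulus and the $2\bar z$ term becomes the real number $2r$ before taking real parts. Everything else is bookkeeping.
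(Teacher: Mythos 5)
The paper states the Koebe distortion theorem as background (a starred theorem with no proof), so there is no in-paper argument to compare against. Your derivation is the standard one — Bieberbach bound applied to the Koebe transform $H$, conversion to a radial differential inequality, and integration — and the core of it (the renormalization computation, the bound $\left|(1-|z|^2)h''/h' - 2\bar z\right|\leq 4$, the passage to $\partial_r\log|h'|$, the partial-fraction integration yielding the $|h'|$ bounds, and the ``moreover'' clause via $g=f\circ\psi_z$ and hyperbolic isometry invariance) is correct.

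Two of your end-of-proof summary claims are, however, imprecise enough that following them literally would not yield the stated bounds. First, the lower growth bound $|h(z)|\geq |z|/(1+|z|)^2$ does not come from ``Koebe quarter applied to the renormalization $H$''; the standard route applies Koebe quarter to $h$ itself (if $|h(z)|<1/4$, the segment $[0,h(z)]$ lies in $h(\mathbb{D})$), pulls that segment back to a path from $0$ to $z$, and integrates the lower distortion bound $|h'|\geq (1-\rho)/(1+\rho)^3$ along it using $|d\zeta|\geq d|\zeta|$. Second, the middle inequality is \emph{not} ``a short consequence of the other two'': dividing the distortion bound by the growth bound only gives $\left((1\mp|z|)/(1\pm|z|)\right)^3$, which is strictly weaker. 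The correct derivation applies the growth theorem to the Koebe transform $H$ at the particular point $w=-z$, using $H(-z) = -h(z)/\bigl((1-|z|^2)h'(z)\bigr)$, which after simplification produces the $\frac{1\pm|z|}{1\mp|z|}$ bounds. Both fixes are routine, but as written those two sentences would mislead a reader trying to fill in the details.
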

    The following Claim is the estimate we use to derive Proposition \ref{prop:koebe-parabolic}, the main result of this subsection.
    \begin{claim}\label{claim:de-branges}
        Let $h:\mathbb{D}\to\mathbb{C}$ be a univalent map. Then whenever $\abs{z}<\frac{1}{2}$, we have
        \begin{align*}
            \abs{h(z)-h(0)-h'(0)z}<100\abs{h'(0)}\abs{z}^2.
        \end{align*}
    \end{claim}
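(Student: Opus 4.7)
The plan is to reduce to the classical normalized class of univalent functions, expand in a Taylor series, and control the tail using a standard coefficient bound.

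First, since both sides of the desired inequality are invariant under replacing $h$ by $(h-h(0))/h'(0)$, I may assume $h(0) = 0$ and $h'(0) = 1$. The claim then becomes the statement that $|h(z) - z| < 100 |z|^2$ for $|z| < 1/2$ whenever $h$ is univalent on $\mathbb{D}$ with $h(0) = 0$ and $h'(0) = 1$, i.e., $h$ lies in the classical class $S$.

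Writing the power series $h(z) = z + \sum_{n \ge 2} a_n z^n$, the de Branges theorem gives $|a_n| \le n$ for every $n \ge 2$. (This is far more than needed: a routine Cauchy estimate applied to $h$ on a circle of radius $3/4$, combined with the Koebe growth bound $|h(z)| \le |z|/(1-|z|)^2$, already yields an exponential bound $|a_n| \lesssim (4/3)^n$, which also suffices.) Then for $|z| < 1/2$, using $|z|^n = |z|^2 |z|^{n-2} \le |z|^2 (1/2)^{n-2}$ for $n \ge 2$,
\begin{align*}
|h(z) - z| \;\le\; \sum_{n=2}^{\infty} |a_n| |z|^n \;\le\; |z|^2 \sum_{n=2}^{\infty} n \,(1/2)^{n-2} \;=\; 6 |z|^2,
\end{align*}
which is much less than $100|z|^2$, as required.

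There is really no obstacle here; the estimate is a textbook consequence of the distortion theory, and the constant $100$ in the statement is chosen generously so that even much cruder coefficient bounds than de Branges would be adequate. The only thing to mention is that the factor $|h'(0)|$ in the stated inequality simply records the effect of the initial normalization that was undone in the first step.
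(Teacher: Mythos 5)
Your proof is correct and follows essentially the same route as the paper: normalize to the class $S$, expand in a power series, bound the coefficients, and sum. The only difference is cosmetic — the paper deliberately avoids de Branges and instead obtains the coefficient bound $|a_n|\leq 6(3/2)^n$ directly from a Cauchy estimate on the circle of radius $2/3$ together with the Koebe distortion theorem (which is exactly the elementary alternative you note in parentheses), keeping the argument self-contained without invoking a very deep theorem for what is a very crude bound.
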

    \begin{remark}\label{rmrk:de-branges-claim}
        The optimal constant in Claim \ref{claim:de-branges} can easily be computed from de Branges' theorem \cite{bieberbach}. However since this is a fairly simple result, we choose to include a more elementary proof (with a non-optimal constant) below.
    \end{remark}
    \begin{proof}[Proof of Claim \ref{claim:de-branges}]
        Without loss of generality we can suppose $h(0)=0$ and $h'(0)=1$. Write 
        \begin{align*}
            h(z)=z+\sum_{n=2}^\infty a_nz^n.
        \end{align*}
        Let $\gamma$ be the circle of radius $\frac{2}{3}$ centered at the origin. Then
        \begin{align*}
            \abs{a_n}=\abs{\frac{1}{2\pi i}\int_{\gamma} \frac{h(z)}{z^{n+1}}dz}\leq\frac{1}{2\pi}\left(\frac{3}{2}\right)^n\int_0^{2\pi} \abs{h\left(\frac{2}{3}e^{i\theta}\right)}d\theta.
        \end{align*}
        By the Koebe distortion theorem, we have 
        \begin{align*}
            \abs{h\left(\frac{2}{3}e^{i\theta}\right)}\leq 6.
        \end{align*}
        Therefore $\abs{a_n}\leq 6\left(\frac{3}{2}\right)^n$. Therefore for $\abs{z}<\frac{1}{2}$, we have 
        \begin{align*}
            \abs{h(z)-z}\leq 6\sum_{n=2}^\infty \left(\frac{3}{2}\abs{z}\right)^n<6\cdot\frac{9}{4}\abs{z}^2\sum_{n=0}^\infty \left(\frac{3}{4}\right)^n<100\abs{z}^2.
        \end{align*}
    \end{proof}
    \subsubsection{Growth of univalent maps that commute with a parabolic M\"obius transformation}\label{subsub:prop-koebe-parabolic}
    We now state and prove the main result of this subsection, Proposition \ref{prop:koebe-parabolic}. As explained at the start of this subsection, starting with a map $g:\mathbb{H}\to\mathbb{C}$ that commutes with $\gamma(z)=z+1$, we construct a map $h:\mathbb{D}\setminus\{0\}\cong\mathbb{H}/\langle\gamma\rangle\to \mathbb{C}/\langle\gamma\rangle\cong\mathbb{C}\setminus\{0\}$. We show that it extends to $0$ with $h(0)=0$ and then apply Claim \ref{claim:de-branges} to $h$. This shows how $h$ behaves near $0$, and hence how $g$ behaves near infinity. We also obtain some information on $g$ from the Koebe distortion theorem and the Koebe quarter theorem applied to $h$.
    \begin{prop}\label{prop:koebe-parabolic}
        There exists a universal constant $C>0$ such that the following holds. Let $g:\mathbb{H}\to\mathbb{C}$ be a univalent holomorphic map with $g(z+1)=g(z)+1$. Then there exists a complex number $\alpha\in\mathbb{C}$ (depending on $g$) so that the following holds.
        \begin{enumerate}
            \item\label{item:g-est} Whenever $\mathrm{Im}(z)\geq C$, we have $\abs{ g(z)-z+\alpha}\leq Ce^{-2\pi\mathrm{Im}(z)}$.
            \item\label{item:g'-est} Whenever $\mathrm{Im}(z)\geq C$, we have $\frac{1}{C}\leq\abs{g'(z)}\leq C$.
            \item\label{item:koebe-quarter} When $\mathrm{Im}(z)>\frac{\log 2}{\pi}-\mathrm{Im}(\alpha)$, we have $z\in g(\mathbb{H})$.
        \end{enumerate}
    \end{prop}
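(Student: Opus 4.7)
The strategy is to pass to the quotients $\mathbb{H}/\langle z\mapsto z+1\rangle\cong\mathbb{D}\setminus\{0\}$ and $\mathbb{C}/\langle u\mapsto u+1\rangle\cong\mathbb{C}^*$ via the covering map $\pi(\zeta)=e^{2\pi i\zeta}$, reducing the proposition to the theory of univalent maps on the disk. The equivariance $g(z+1)=g(z)+1$ lets us define a holomorphic map $h:\mathbb{D}\setminus\{0\}\to\mathbb{C}^*$ by $h(e^{2\pi iz})=e^{2\pi ig(z)}$, and $h$ inherits univalence from $g$. Once we establish that $h$ extends holomorphically to $\mathbb{D}$ with $h(0)=0$, the three conclusions will follow from Claim \ref{claim:de-branges} and the Koebe theorems applied to $h$.

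The main obstacle is extending $h$ to $0$ with $h(0)=0$. Univalence of $h$ on a punctured neighborhood of $0$ rules out an essential singularity (Casorati--Weierstrass combined with the open mapping theorem), so $h$ has either a removable singularity or a pole at $0$. I pin down which case occurs by a winding-number computation on a small loop $\gamma_\epsilon(t)=\epsilon e^{2\pi it}$, $t\in[0,1]$, around $0$. Its $\pi$-lift to $\mathbb{H}$ is the horizontal path $t\mapsto z_0+t$ with $e^{2\pi iz_0}=\epsilon$, so $g(z_0+1)=g(z_0)+1$ forces $h\circ\gamma_\epsilon$ to wind exactly $+1$ time around $0\in\mathbb{C}$. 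A pole of order $k$ would give winding $-k$ and an extension with $h(0)\neq 0$ would give winding $0$, while a simple zero at $0$ gives $+1$; hence $h(0)=0$ and by univalence $h'(0)\neq 0$, so $h$ is univalent on all of $\mathbb{D}$.

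With $h$ univalent on $\mathbb{D}$ and $h(0)=0$, define $\alpha\in\mathbb{C}$ (modulo $\mathbb{Z}$) by $e^{-2\pi i\alpha}=h'(0)$, so that $|h'(0)|=e^{2\pi\mathrm{Im}(\alpha)}$. For part \eqref{item:g-est}, I apply Claim \ref{claim:de-branges} to $h$, substitute $w=e^{2\pi iz}$ and $h(w)=e^{2\pi ig(z)}$, and divide by $|h'(0)w|$ to get
\[
|e^{2\pi i(g(z)-z+\alpha)}-1|\leq 100\,e^{-2\pi\mathrm{Im}(z)}\qquad\text{whenever }\mathrm{Im}(z)>\tfrac{\log 2}{2\pi}.
\]
For $\mathrm{Im}(z)$ sufficiently large the right-hand side is less than $1$, so $g(z)-z+\alpha$ is within $1/2$ of some integer; this integer is constant in $z$ by continuity and connectedness of the region, and we absorb it into the choice of representative of $\alpha$. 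The elementary bound $|e^{i\theta}-1|\geq|\theta|/2$ for small $|\theta|$ then gives part \eqref{item:g-est}. For part \eqref{item:g'-est}, differentiating $h(e^{2\pi iz})=e^{2\pi ig(z)}$ yields $g'(z)=\frac{wh'(w)}{h(w)}$, and the third inequality of the Koebe distortion theorem (scale-invariant in $h$) gives $\frac{1-|w|}{1+|w|}\leq|g'(z)|\leq\frac{1+|w|}{1-|w|}$ with $|w|=e^{-2\pi\mathrm{Im}(z)}$. For part \eqref{item:koebe-quarter}, the Koebe quarter theorem applied to $h$ shows that $h(\mathbb{D})$ contains the Euclidean disk of radius $|h'(0)|/4=e^{2\pi\mathrm{Im}(\alpha)}/4$ about $0$, so the condition $\mathrm{Im}(z)>\tfrac{\log 2}{\pi}-\mathrm{Im}(\alpha)$ is exactly $|e^{2\pi iz}|<|h'(0)|/4$, forcing $e^{2\pi iz}=h(e^{2\pi iz_0})$ for some $z_0\in\mathbb{H}$; a suitable $\mathbb{Z}$-translate of $z_0$ then satisfies $g(\cdot)=z$, giving $z\in g(\mathbb{H})$.
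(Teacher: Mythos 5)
Your proposal is correct and follows essentially the same route as the paper: pass to the quotient via $\pi(\zeta)=e^{2\pi i\zeta}$ to obtain the univalent map $h$ on $\mathbb{D}\setminus\{0\}$, establish the removable singularity with $h(0)=0$ by the same winding-number/argument-principle computation (the paper phrases it as $\frac{1}{2\pi i}\int_\gamma \frac{h'}{h}\,dz=\int_0^1 g'(t+iR)\,dt=1$, which is exactly your lift argument), then derive all three items from Claim \ref{claim:de-branges}, the Koebe distortion estimate on $w h'(w)/h(w)$, and the Koebe quarter theorem. The only cosmetic differences are in how the two write-ups recover the additive constant in $g(z)-z+\alpha$ (you use $|e^{i\theta}-1|\geq|\theta|/2$; the paper notes $\exp$ is bilipschitz near $0$), which amounts to the same thing.
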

    \begin{proof}
        Note that $\exp(2\pi ig(z))$ is $1$-periodic, so there exists a holomorphic map $h:\mathbb{D}\setminus\{0\}\to\mathbb{C}$ defined by $h(\exp(2\pi iz))=\exp(2\pi ig(z))$. Since $g$ is univalent, so is $h$. We first show that $h$ extends to $0$.
        \begin{claim}
            The map $h$ has a removable singularity at $0$ and can be extended holomorphically so that $h(0)=0$.
        \end{claim}
        \begin{proof} Since $h$ is univalent, it does not have an essential singularity at $0$. We extend $h$ to $0$ so that it is a meromorphic function. Let $\gamma(t)=\exp(2\pi i(t+iR))$ for $t\in[0,1]$, and $R>0$ large. Then $\frac{h'(\exp(2\pi iz))}{h(\exp(2\pi iz))}\exp(2\pi iz)=g'(z)$, and hence 
        \begin{align*}
            \frac{1}{2\pi i}\int_\gamma \frac{h'(z)}{h(z)}dz&=\int_0^1 g'(t+iR)dt=1.
        \end{align*}
        This shows that $h$ has a simple zero at $0$. Hence $h(0)=0$.
        \end{proof}
        \par We can now apply Claim \ref{claim:de-branges} to get 
        \begin{align*}
            \abs{h(z)-h'(0)z}<100 \abs{h'(0)}\abs{z}^2,
        \end{align*}
        whenever $\abs{z}<\frac{1}{2}$. We now set $C>\frac{\log 2}{2\pi}$, so that when $\mathrm{Im}(z)\geq C$, we have $\abs{\exp(2\pi iz)}<\frac{1}{2}$, and hence 
        \begin{align*}
            \abs{\frac{1}{h'(0)}\exp(2\pi ig(z))-\exp(2\pi iz)}<100 e^{-4\pi\mathrm{Im}(z)}.
        \end{align*}
        Therefore 
        \begin{align*}
            \abs{\exp(2\pi i(g(z)-z+\alpha))-1}<100 e^{-2\pi\mathrm{Im}(z)},
        \end{align*}
        where $\alpha\in\mathbb{C}$ is such that $\exp(-2\pi i\alpha)=h'(0)$. For $C$ large enough, whenever $\mathrm{Im}(z)\geq C$, we will have $\abs{g(z)-z+\alpha-m}\leq \frac{1}{10}$ for some integer $m\in\mathbb{Z}$. We replace $\alpha$ with $\alpha-m$, so that $\abs{g(z)-z+\alpha}\leq \frac{1}{10}$. Since $\exp$ is bilipschitz on the disk centered at $0$ of radius $\frac{1}{10}$, we see that 
        \begin{align*}
            \abs{g(z)-z+\alpha}\lesssim e^{-2\pi\mathrm{Im}(z)},
        \end{align*}
        and (\ref{item:g-est}) follows.
        \par To show (\ref{item:g'-est}), we note that for $w=\exp(2\pi iz)$, 
        \begin{align*}
            g'(z)=w\frac{h'(w)}{h(w)}.
        \end{align*}
        By the Koebe distortion theorem, since $h$ is univalent on the unit disk, we have 
        \begin{align*}
            \frac{1-\abs{w}}{1+\abs{w}}\leq\abs{w\frac{h'(w)}{h(w)}}\leq \frac{1+\abs{w}}{1-\abs{w}},
        \end{align*}
        so for any $C>0$ fixed, $\abs{w}=e^{-2\pi\mathrm{Im}(z)}\leq e^{-2\pi C}$, and we have $\abs{g'(z)}=\abs{w\frac{h'(w)}{h(w)}}\approx 1$.
        \par We now turn to (\ref{item:koebe-quarter}). Note that $\mathrm{Im}(\alpha)=\frac{1}{2\pi}\mathrm{Re}(-2\pi i\alpha)=\frac{1}{2\pi}\log\abs{h'(0)}$. By the Koebe quarter theorem, the set $h(\mathbb{D})$ contains the disk $D$ centered at $0$ with radius $\frac{1}{4}\abs{h'(0)}=\frac{1}{4}e^{2\pi\mathrm{Im}(\alpha)}$. Note that $\exp(2\pi iz)\in D$ if and only if $\mathrm{Im}(z)\geq -\mathrm{Im}(\alpha)+\frac{1}{2\pi}\log 4=\frac{\log 2}{\pi}-\mathrm{Im}(\alpha)$, so the final claim of the Proposition follows.
    \end{proof}
    \subsection{Derivative bounds on a horoball}\label{subseciton:derivative-bound}
    Here we give a lower bound on the derivative $\abs{f'}$ of a univalent map $f:\mathbb{D}\to\mathbb{C}$ on a horoball, assuming that $f$ conjugates a parabolic isometry of $\mathbb{D}$ that preserves that horoball to a parabolic M\"obius transformation. The main result is Lemma \ref{lm:f'-estimate}. This lemma is used to bound the integral $\int \abs{f'}^{-q}dxdy$ over a horoball in Corollary \ref{cor:estimate-horoball-integral}. This corollary is the result we use in the coming sections. 
    \par We first define the horoballs we will consider. All horoballs will be open. Given a horoball $H$, the horocycle $\partial H$ admits a natural orientation as follows. The vector $v$ along $\partial H$ at $z\in\partial H$ is positive if $(v, w)$ is a positively-oriented basis of $T_z\mathbb{D}$, where $w$ is the vector at $z$ tangent to the geodesic ray from $z$ to the point at infinity of $H$.
    \begin{dfn}\label{dfn:horoball}
        A horoball $H\subset\mathbb{D}$ is $\ell$-adapted to a parabolic isometry $\gamma\in\mathrm{Aut}(\mathbb{D})$ if the following conditions hold,
        \begin{enumerate}
            \item the parabolic $\gamma$ preserves $H$,
            \item the distance $\sup_{z\in H}\mathrm{dist}(z, \gamma(z))=\ell$,
            \item the parabolic $\gamma$ moves points on $\partial H$ in the positive direction.
        \end{enumerate}  
        When $0\not\in H$, we define the anchor of $H$ to be the point $z\in\partial H$ closest to the origin in the hyperbolic metric.
    \end{dfn}
    \begin{dfn}\label{dfn:parabolic-pm}
        We say that a parabolic $\gamma\in\mathrm{Aut}(\mathbb{D})$ is positive if $\gamma$ moves points on $\partial H$ in the positive direction, for some horoball $H$ that is preserved by $\gamma$. We say that $\gamma$ is negative if it is not positive.
    \end{dfn}
    We note that $\gamma\in\mathrm{Aut}(\mathbb{D})$ is a positive parabolic if and only if $\gamma^{-1}$ is a negative parabolic. It is clear from the definitions that only positive parabolics in $\mathrm{Aut}(\mathbb{D})$ can be adapted to horoballs.
    \begin{remark}\label{rmrk:adapted-horoballs-exist}
        When $\gamma(z)=z+1$ in the upper half-plane model, the horoballs $\{z:\mathrm{Im}(z)\geq C\}$ are $\mathrm{dist}(iC, iC+1)$-adapted. The function $C\to\mathrm{dist}(iC, iC+1)$ is decreasing, converges to infinity as $C\to 0$ and to $0$ as $C\to\infty$, so by continuity $\ell$-adapted horoballs exist and are unique for all $\ell>0$. Any positive parabolic is conjugate to $\gamma$, so the analogous picture holds for an arbitrary positive parabolic. 
    \end{remark}
    Our main result is the following. 
    \begin{lm}\label{lm:f'-estimate}
There exist universal constants $L, C>0$ such that the following holds. Let $f:\mathbb{D}\to\mathbb{C}$ be a univalent map, and let $H$ be a horoball not containing $0$ that is $\ell$-adapted to a parabolic $\gamma\in\mathrm{Aut}(\mathbb{D})$, with $\ell<L$. Let the anchor of $H$ be $z_0$. We assume that $f$ conjugates $\gamma$ to a parabolic M\"obius transformation. Then for all $z\in H$,
        \begin{align*}
            \abs{f'(z)}\geq C\abs{f'(z_0)}.
        \end{align*}
    \end{lm}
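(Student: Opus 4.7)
The plan is to conjugate $f$ into the setup of Proposition \ref{prop:koebe-parabolic} and then transport the resulting estimate back through the chain rule. Choose a Möbius isometry $\phi:\mathbb{H}\to\mathbb{D}$ satisfying $\phi(\infty)$ equals the parabolic fixed point of $\gamma$, $\phi^{-1}\gamma\phi(z)=z+1$ (the orientation condition (3) in Definition \ref{dfn:horoball} fixes the sign), and $\phi(iT)=z_0$, where $T>0$ is determined by $\ell$ via the translation length formula on a horocycle. Under $\phi$ the horoball $H$ becomes $\{\mathrm{Im}(z)\geq T\}$. Choose a Möbius transformation $\psi:\mathbb{CP}^1\to\mathbb{CP}^1$ with $\psi^{-1}\,\rho(\gamma)\,\psi(z)=z+1$; such $\psi$ exists since $\rho(\gamma)$ is parabolic, and is unique up to the substitution $\psi\mapsto\psi\circ(z\mapsto z+b)$. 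Set $g=\psi^{-1}\circ f\circ\phi$. Equivariance forces $g(z+1)=g(z)+1$, so Proposition \ref{prop:koebe-parabolic} produces $\alpha\in\mathbb{C}$ with $g(z)=z-\alpha+O(e^{-2\pi\mathrm{Im}(z)})$ and $|g'(z)|\approx 1$ for $\mathrm{Im}(z)$ above a universal threshold $C_0$. Using the remaining freedom in $\psi$, I replace $\psi$ by $\psi\circ(z\mapsto z-\alpha)$, so that $\alpha=0$ and $|g(z)|\approx|z|$ on $\{\mathrm{Im}(z)\geq C_0\}$.

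Now fix $\ell$ so small that $T\geq C_0$. Writing $f=\psi\circ g\circ\phi^{-1}$, the chain rule at $w=\phi(z)\in H$ gives
\begin{align*}
\frac{|f'(w)|}{|f'(z_0)|}=\frac{|g'(z)|}{|g'(iT)|}\cdot\frac{|\psi'(g(z))|}{|\psi'(g(iT))|}\cdot\frac{|\phi'(iT)|}{|\phi'(z)|}.
\end{align*}
The first factor is universally bounded by Proposition \ref{prop:koebe-parabolic}. When the image parabolic fixes a finite point $p\in\mathbb{C}$, we may arrange $\psi(u)=p+\beta/u$, so $|\psi'(u)|=|\beta|/|u|^2$ and the middle factor equals $|g(iT)|^2/|g(z)|^2\approx T^2/|z|^2$; the easier case $p=\infty$ has $\psi$ affine and the factor equals $1$. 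For the last factor, the isometry identity $|\phi'(z)|=(1-|\phi(z)|^2)/(2\,\mathrm{Im}(z))$ together with an explicit Cayley-type formula for $\phi$ yields $\approx|z|^2/T^2$. The two factors multiply to a universally bounded quantity, giving $|f'(w)|\gtrsim|f'(z_0)|$ with a universal constant.

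The main obstacle is the last step, certifying the cancellation between the two Möbius-derivative factors. The cancellation is geometrically natural because both $\phi^{-1}$ and $\psi^{-1}$ are coordinates adapted to the respective parabolic fixed points, and their Jacobians each behave like $|z|^{-2}$ in these coordinates; but the individual factors are not bounded, so the matching of the exponents matters. The subtle regime is $z=x+iT$ with $|x|$ large, corresponding to $w\in\partial H$ near the parabolic fixed point on $\partial\mathbb{D}$: there both $|\phi'(z)|$ and $|\psi'(g(z))|$ are small, and their near-cancellation relies delicately on the normalization $\alpha=0$ and on the asymptotic $g(z)\approx z$ being uniform throughout $\{\mathrm{Im}(z)\geq C_0\}$, which is precisely what requires $\ell$ to be chosen sufficiently small.
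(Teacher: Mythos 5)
There is a genuine gap, and it is visible from the fact that your argument never uses the hypothesis that $f$ is valued in $\mathbb{C}$ rather than $\mathbb{CP}^1$. The Remark following Lemma~\ref{lm:f'-estimate} notes that without $\infty\notin f(\mathbb{D})$ the statement is false, so any proof that does not invoke this hypothesis in an essential way must break down somewhere. Here is where yours does: you first spend the translational freedom in $\psi$ to normalize $\alpha=0$, and then \emph{also} claim ``we may arrange $\psi(u)=p+\beta/u$.'' These two normalizations are incompatible. The $\psi$'s conjugating $\rho(\gamma)$ to $z\mapsto z+1$ form a one-parameter family $u\mapsto p+\beta/(u+c)$, and the choice $c=0$ (i.e.\ $\psi(0)=\infty$) is exactly one point in this family; once you impose $\alpha=0$, the value of $c$ is forced by $f$ and $\phi$ and is in general nonzero and unbounded. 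Consequently $\abs{\psi'(u)}=\abs{\beta}/\abs{u+c}^2$, the middle factor is $\approx\abs{iT+c}^2/\abs{z+c}^2$ rather than $T^2/\abs{z}^2$, and it does \emph{not} cancel the last factor $\abs{z+i\mu}^2/\abs{iT+i\mu}^2$ unless you control $c$. (Equivalently: if you keep $\psi(u)=p+\beta/u$ and drop the $\alpha=0$ normalization, the middle factor is $\approx\abs{iT-\alpha}^2/\abs{z-\alpha}^2$, with $\alpha$ unbounded.)

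This is precisely where the paper uses $\infty\notin f(\mathbb{D})$: with $\psi(u)=p+\beta/u$, it implies $0\notin g(\mathbb{H})$, and then item~(3) of Proposition~\ref{prop:koebe-parabolic} yields the a priori bound $\mathrm{Im}(\alpha)\leq\frac{\log 2}{\pi}$. Combined with the fact that $\mathrm{Re}(\textbf{z}_0)=0$ (so the large $\mathrm{Re}(\alpha)$ also enlarges the numerator $\abs{iT-\alpha}^2$ proportionally), this lets one certify the cancellation. Your last paragraph correctly flags that ``their near-cancellation relies delicately on the normalization $\alpha=0$'' — but the normalization $\alpha=0$ does not make the problem go away, it merely transfers the uncontrolled complex parameter from $\alpha$ into the shift $c$ in $\psi$. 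To close the gap you need an estimate on $\alpha$ (or equivalently on $c$), and the only available input is $\infty\notin f(\mathbb{D})$ via the Koebe quarter theorem.
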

    \begin{remark}
        It is crucial that $f(\mathbb{D})$ does not contain infinity in Lemma \ref{lm:f'-estimate}. Without this assumption, the Lemma does not hold. Note that we allow $f$ to be unbounded, or equivalently $\infty\in\partial f(\mathbb{D})$.
    \end{remark}
    \begin{cor}\label{cor:estimate-horoball-integral}
        Let $L, C, \ell$ be as in Lemma \ref{lm:f'-estimate}. Let $H$ be a horoball not containing $0$ that is $\ell$-adapted to a parabolic $\gamma\in\Gamma$, with anchor $z_0\in\partial H$. Then for any $q>0$,
        \begin{align*}
            \int_H \frac{1}{\abs{f'(z)}^{q}}dxdy\leq\frac{\pi}{4C^q} \frac{\left(1-\abs{z_0}^2\right)^2}{\abs{f'(z_0)}^q}.
        \end{align*}
    \end{cor}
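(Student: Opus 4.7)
The plan is to combine Lemma \ref{lm:f'-estimate} with a direct computation of the Euclidean area of the horoball $H$. First I would invoke Lemma \ref{lm:f'-estimate} pointwise to obtain $\abs{f'(z)} \geq C\abs{f'(z_0)}$ for every $z \in H$. Raising to the $(-q)$-th power and integrating over $H$ yields
\begin{align*}
\int_H \frac{1}{\abs{f'(z)}^q}\,dxdy \leq \frac{1}{C^q\abs{f'(z_0)}^q}\cdot\operatorname{Area}_{\mathrm{Euc}}(H),
\end{align*}
so the problem reduces to estimating $\operatorname{Area}_{\mathrm{Euc}}(H)$ in terms of $\abs{z_0}$.

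Next I would identify $H$ as a Euclidean disk tangent to $\partial\mathbb{D}$ from inside at the ideal point $p$ of the horoball. The anchor $z_0$, being the point of $\partial H$ closest to the origin in the hyperbolic metric, must lie on the hyperbolic geodesic from $0$ to $p$; since this geodesic is the Euclidean segment from $0$ to $p$, the anchor $z_0$ sits diametrically opposite to $p$ on the horocycle $\partial H$. Hence the Euclidean diameter of $H$ equals $1 - \abs{z_0}$, and its Euclidean area is $\pi(1-\abs{z_0})^2/4$.

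Plugging this into the previous display and using the elementary inequality $(1-\abs{z_0})^2 \leq (1-\abs{z_0})^2 (1+\abs{z_0})^2 = (1-\abs{z_0}^2)^2$ produces the desired estimate with constant $\pi/(4C^q)$. There is no substantial obstacle: the corollary is essentially a one-line consequence of Lemma \ref{lm:f'-estimate} together with the explicit Euclidean shape of a horoball in $\mathbb{D}$. The only point that warrants care is matching the anchor from Definition \ref{dfn:horoball} with the diametrically-opposite description of $z_0$ on $\partial H$, but this is immediate from the fact that Euclidean rays through the origin are hyperbolic geodesics in $\mathbb{D}$.
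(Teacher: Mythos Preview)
Your proposal is correct and follows essentially the same route as the paper: apply Lemma \ref{lm:f'-estimate} to pull $\abs{f'(z_0)}^{-q}$ out of the integral, compute the Euclidean area of $H$ as $\pi(1-\abs{z_0})^2/4$, and then bound $(1-\abs{z_0})^2\leq (1-\abs{z_0}^2)^2$. The only small addition in the paper's version is the remark that $f$ conjugates the parabolic $\gamma$ to a parabolic $\rho(\gamma)$, which is the hypothesis needed to invoke Lemma \ref{lm:f'-estimate}; you implicitly assume this, and it is indeed standard in this setting.
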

    \begin{proof}
        It is standard that $f$ conjugates $\gamma$ to another parabolic $\rho(\gamma)$. From Lemma \ref{lm:f'-estimate} we see that 
        \begin{align*}
            \int_H \frac{1}{\abs{f'(z)}^q}dxdy&\leq \frac{1}{C^q\abs{f'(z_0)}^q} \int_H dxdy=\frac{1}{C^q\abs{f'(z_0)}^q} \frac{\pi(1-\abs{z_0})^2}{4}\\
            &\leq \frac{\pi}{4C^q}\frac{\left(1-\abs{z_0}^2\right)^2}{\abs{f'(z_0)}^q}.
        \end{align*}
    \end{proof}
    \begin{proof}[Proof of Lemma \ref{lm:f'-estimate}]
        The desired inequality will follow from the estimates in Proposition \ref{prop:koebe-parabolic}. We first need to conjugate $f$ to a map $\mathbb{H}\to\mathbb{C}$ that commutes with $z\to z+1$. 
        \par Suppose without loss of generality that $\gamma$ fixes $1$. We will construct a commuting square
        \[
        \begin{tikzcd}
        \mathbb{H} \arrow{r}{g} \arrow[swap]{d}{A} & \mathbb{C} \arrow{d}{B} \\
        \mathbb{D} \arrow{r}{f} & \mathbb{C}
        \end{tikzcd},
        \]
        where $A$ and $B$ are M\"obius transformations, and $g(z+1)=g(z)+1$.
        \par We let $A(z)=\frac{ z-\lambda i}{z+\lambda i}$ for $\lambda>0$ chosen such that $A^{-1}\circ\gamma\circ A(z)=z+1$. We choose $B$ depending on finiteness of the fixed point of $f\circ\gamma\circ f^{-1}$. 
        \par We first give some preliminary observations that will be useful in both cases. The following Claim relates $f'$ and $g'$.
        \begin{claim}\label{claim:lm:deriv-bound-prelim-claim}
            We have for all $z$,
            \begin{align*}
                (f'\circ A) (z)=(B'\circ g)(z) g'(z) \frac{ (z+i\lambda)^2}{2\lambda i}.
            \end{align*}
        \end{claim}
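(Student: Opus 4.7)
The plan is to obtain the identity by differentiating the commuting relation $f \circ A = B \circ g$ and then computing $A'$ explicitly. Applying the chain rule to both sides of $f(A(z)) = B(g(z))$ yields
\begin{align*}
    (f' \circ A)(z) \cdot A'(z) = (B' \circ g)(z) \cdot g'(z),
\end{align*}
so that $(f' \circ A)(z) = (B' \circ g)(z)\, g'(z) / A'(z)$. It therefore suffices to verify that $1/A'(z) = (z+i\lambda)^2/(2\lambda i)$.

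Since $A(z) = (z - \lambda i)/(z + \lambda i)$, a direct quotient-rule computation gives
\begin{align*}
    A'(z) = \frac{(z+\lambda i) - (z-\lambda i)}{(z+\lambda i)^2} = \frac{2\lambda i}{(z+\lambda i)^2},
\end{align*}
and hence $1/A'(z) = (z+i\lambda)^2/(2\lambda i)$. Substituting this into the chain rule identity produces the stated formula.

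There is no real obstacle here, and no appeal to the specific structure of $g$, $f$, or the parabolic $\gamma$ is required: the claim is a purely formal consequence of the commuting diagram $f \circ A = B \circ g$ together with the explicit form of the M\"obius transformation $A$. The definition of $\lambda$ (chosen so that $A^{-1} \circ \gamma \circ A$ is the translation $z \mapsto z+1$) and the compatibility of $g$ with this translation play no role in this particular identity; they will only become relevant in the subsequent steps of the proof of Lemma \ref{lm:f'-estimate}, when Proposition \ref{prop:koebe-parabolic} is invoked.
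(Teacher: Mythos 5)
Your proof is correct and is essentially the paper's argument in a slightly rearranged form: you differentiate $f\circ A = B\circ g$ and compute $A'(z)$ directly, while the paper differentiates $f = B\circ g\circ A^{-1}$ and computes $(A^{-1})'(A(z))$; these are equal by the inverse function theorem, so the two calculations are interchangeable.
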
 
        \begin{proof}
            We have $f=B\circ g\circ A^{-1}$, and $A^{-1}(z)=\lambda i\frac{1+z}{1-z}$. Therefore 
            \begin{align*}
                f'(A(z))=B'(g(z)) g'(z) \left(A^{-1}\right)'(A(z)).
            \end{align*}
            We have $(A^{-1})'(A(z))=\frac{2i\lambda }{\left(1-\frac{z-\lambda i}{z+\lambda i}\right)^2}=\frac{(z+\lambda i)^2}{2\lambda i}$, and the result follows.
        \end{proof}
        For $z\in H$, we write $\textbf{z}=A^{-1}(z)$. In particular $\textbf{z}_0=A^{-1}(z_0)$.
        \begin{claim}\label{claim:bold-z-z0}
            We have $\mathrm{Re}(\textbf{z}_0)=0$ and $\mathrm{Im}(\textbf{z})\geq\mathrm{Im}(\textbf{z}_0)\geq\lambda$ for all $z\in H$.
        \end{claim}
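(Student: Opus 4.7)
The plan is to transport everything to the upper half-plane via $A^{-1}$ and read off the three assertions from the resulting explicit picture. Since $A:\mathbb{H}\to\mathbb{D}$ is a hyperbolic isometry sending $\infty\mapsto 1$ (the parabolic fixed point of $\gamma$) and $\lambda i\mapsto 0$, the preimage $\textbf{H}:=A^{-1}(H)$ is a horoball in $\mathbb{H}$ based at $\infty$. Any such horoball has the form $\{z:\mathrm{Im}(z)\geq c\}$ for some $c>0$, so $\partial\textbf{H}=\{y=c\}$. The height $c$ is pinned down by the $\ell$-adapted condition: since $\gamma$ becomes $z\mapsto z+1$ in the $\mathbb{H}$-coordinates, the displacement $\mathrm{dist}_\mathbb{H}(z,z+1)=\cosh^{-1}\left(1+\frac{1}{2\mathrm{Im}(z)^2}\right)$ is strictly decreasing in $\mathrm{Im}(z)$, so its supremum on $\textbf{H}$ is attained on $\partial\textbf{H}$, giving $\ell=\cosh^{-1}\left(1+\frac{1}{2c^2}\right)$.

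For assertion (1), I would use that $A^{-1}$ is an isometry, so $\textbf{z}_0=A^{-1}(z_0)$ is the hyperbolically nearest point on $\partial\textbf{H}$ to $A^{-1}(0)=\lambda i$. Geodesics through $\infty$ in $\mathbb{H}$ are vertical lines, and such a vertical geodesic meets any horocycle based at $\infty$ perpendicularly; thus the nearest point to $\lambda i$ on the horizontal line $\{y=c\}$ is $ic$. This gives $\textbf{z}_0=ic$, and in particular $\mathrm{Re}(\textbf{z}_0)=0$. For assertion (2), any $z\in H$ satisfies $\textbf{z}\in\textbf{H}$, hence $\mathrm{Im}(\textbf{z})\geq c=\mathrm{Im}(\textbf{z}_0)$.

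Assertion (3) is the only delicate step. The inequality $\mathrm{Im}(\textbf{z}_0)=c\geq\lambda$ is equivalent to $\lambda i\notin\mathrm{int}(\textbf{H})$, i.e., $0\notin\mathrm{int}(H)$ in $\mathbb{D}$; unpacking the formulas above it is also equivalent to $\ell\leq\mathrm{dist}_\mathbb{D}(0,\gamma(0))$. To establish it I would invoke the freedom in choosing the universal constant $\ell$ from the statement of Lemma \ref{lm:f'-estimate}, picking it small enough that the $\ell$-adapted horoball around the parabolic $\gamma$ under consideration does not swallow the origin. The main obstacle is precisely this point: assertions (1) and (2) amount to formal bookkeeping once $\textbf{H}$ has been identified, but (3) is where the universal smallness of $\ell$ is genuinely used and is the only place where more than a distance computation in $\mathbb{H}$ is required.
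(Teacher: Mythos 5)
Your approach is essentially the paper's: conjugate by $A^{-1}$ to the upper half-plane where the horoball becomes $\{\mathrm{Im}\,z\geq c\}$, identify the anchor as $ic$, and read off the assertions. The paper condenses assertions (1) and (3) into a single observation — the anchor $z_0$ lies on the geodesic ray $[0,1)\subset\mathbb{D}$ from the origin toward the parabolic fixed point, so $\textbf{z}_0$ lies on the vertical ray $[\lambda i,\infty)$ — and then gets (2) exactly as you do, from $A^{-1}(H)$ being a horoball based at $\infty$.

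You are right that the inequality $c\geq\lambda$ (equivalently, $z_0\in[0,1)$ rather than on the opposite ray $[0,-1)$; equivalently, $0\notin\mathrm{int}(H)$) is the only nontrivial content, and the paper asserts it without comment, so your instinct to flag it is sound. However, be careful with the proposed resolution. You cannot make $\ell$ small enough \emph{universally} to guarantee $0\notin\mathrm{int}(H)$ for \emph{every} parabolic $\gamma$: the $\ell$-adapted horoball contains $0$ precisely when $\mathrm{dist}(0,\gamma(0))<\ell$, and there are parabolics with arbitrarily small displacement at the origin. What saves the application in Theorem \ref{thm:main-estimate} is that the relevant parabolics all belong to a fixed discrete group $\Gamma$, so $\inf\mathrm{dist}(0,\gamma(0))>0$ over parabolics in $\Gamma$, and $\ell$ can then be taken small depending on $\Gamma$ — but this is a $\Gamma$-dependent choice, not a universal one, and the same caveat applies to the paper's unstated assumption $z_0\in[0,1)$.
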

        \begin{proof}
            We have $A(\infty)=1$ and $A(\lambda i)=0$. Since the geodesic ray $[0,1)$ contains $z_0$, the geodesic ray $[\lambda i, \infty)$ contains $\textbf{z}_0$. In particular $\mathrm{Re}(\textbf{z}_0)=0$ and $\mathrm{Im}(\textbf{z}_0)\geq\lambda$.
            \par Since $H$ is a horoball containing $1$, the horoball $A^{-1}(H)$ contains infinity. Since $\textbf{z}_0\in\partial A^{-1}(H)$, the second claim follows.
        \end{proof}
        \begin{enumerate}
            \item[Case 1] We first assume that $f\circ\gamma\circ f^{-1}$ fixes infinity. We let $B(z)=az$ for $a\in\mathbb{C}\setminus\{0\}$ chosen so that $B^{-1}\circ (f\circ\gamma\circ f^{-1})\circ B(z)=z+1$. Since $A$ and $B$ are invertible, the commuting square exists by setting $g=B^{-1}\circ f\circ A$. Then $f\circ A$ conjugates $z\to z+1$ to $f\circ\gamma\circ f^{-1}$, and hence $B^{-1}\circ f\circ A=g$ conjugates $z\to z+1$ to itself. In particular $g(z+1)=g(z)+1$.
            \par It follows from Claim \ref{claim:lm:deriv-bound-prelim-claim} that 
            \begin{align*}
                \abs{f'(z)}=\abs{\frac{a}{2\lambda}} \abs{g'(\textbf{z})} \abs{\textbf{z}+\lambda i}^2.
            \end{align*}
            By Claim \ref{claim:bold-z-z0}, we have 
            \begin{align*}
                \frac{\abs{f'(z)}}{\abs{f'(z_0)}}=\frac{\abs{g'(\textbf{z})}}{\abs{g'(\textbf{z}_0)}} \frac{\abs{\textbf{z}+\lambda i}^2}{\abs{\textbf{z}_0+\lambda i}^2}\geq \frac{\abs{g'(\textbf{z})}}{\abs{g'(\textbf{z}_0)}}\approx 1,
            \end{align*}
            where we used Proposition \ref{prop:koebe-parabolic} in the second estimate. 
            \item[Case 2] Assume now that $f\circ\gamma\circ f^{-1}$ fixes a point $a\in\mathbb{C}$. Set $B(z)=a+\frac{b}{z}$, where $b\in\mathbb{C}\setminus\{0\}$ is chosen so that $B^{-1}\circ (f\circ\gamma\circ f^{-1})\circ B(z)=z+1$. As in Case 1, both $A$ and $B$ are invertible so the commuting square exists, and $g(z+1)=g(z)+1$.
            \par By Claim \ref{claim:lm:deriv-bound-prelim-claim},
            \begin{align*}
                \abs{f'(A(z))}=\abs{\frac{b}{2\lambda}} \frac{1}{\abs{g(z)}^2}\abs{g'(z)} \abs{z+\lambda i}^2.
            \end{align*}
            We have 
            \begin{align*}
                \frac{\abs{f'(z)}}{\abs{f'(z_0)}}=\abs{\frac{g(\textbf{z}_0)}{g(\textbf{z})}}^2 \abs{\frac{g'(\textbf{z})}{g'(\textbf{z}_0)}} \abs{\frac{\textbf{z}+\lambda i}{\textbf{z}_0+\lambda i}}^2.
            \end{align*}
            By Proposition \ref{prop:koebe-parabolic}, $\abs{\frac{g'(\textbf{z})}{g'(\textbf{z}_0)}}\approx 1$ and there exists $\alpha\in\mathbb{C}$ such that
            \begin{align*}
                \abs{g(\textbf{z})-\textbf{z}+\alpha}\lesssim e^{-2\pi \mathrm{Im}(\textbf{z})}.
            \end{align*}
            Since infinity is not in the image of $f$, $0$ is not in the image of $g$. In particular by Proposition \ref{prop:koebe-parabolic}, $\frac{\log 2}{\pi}-\mathrm{Im}(\alpha)\geq 0$. For $\ell$ small enough, which corresponds to $\mathrm{Im}(\textbf{z}_0)$ large enough, we have $\mathrm{Im}(\textbf{z}-\alpha)\geq\mathrm{Im}(\textbf{z}_0)-\frac{\log 2}{\pi}\gtrsim e^{-2\pi\mathrm{Im}(\textbf{z}_0)}$, and hence $\abs{g(\textbf{z})}\approx\abs{\textbf{z}-\alpha}$. Therefore
            \begin{align*}
                \frac{\abs{f'(z)}}{\abs{f'(z_0)}}=\left.\abs{\frac{\textbf{z}_0-\alpha}{\textbf{z}_0+\lambda i}}^2\middle/\abs{\frac{\textbf{z}-\alpha}{\textbf{z}+\lambda i}}^2\right..
            \end{align*}
            Let $L$ be small enough so that for all $\textbf{z}\in A^{-1}(H)$, we have $\mathrm{Im}(\textbf{z})\geq 1>10\frac{\log 2}{\pi}>10\mathrm{Im}(\alpha)$. Hence  
            \begin{align*}
                \abs{\frac{\textbf{z}-\alpha}{\textbf{z}+\lambda i}}^2\approx \frac{(\mathrm{Re}(\textbf{z})-\mathrm{Re}(\alpha))^2+\mathrm{Im}(\textbf{z})^2}{\abs{\textbf{z}}^2+\lambda^2}\lesssim \frac{\abs{\textbf{z}}^2+\mathrm{Re}(\alpha)^2}{\abs{\textbf{z}}^2}.
            \end{align*}
            Similarly we have 
            \begin{align*}
                \abs{\frac{\textbf{z}_0-\alpha}{\textbf{z}_0+\lambda i}}^2&\approx \frac{\mathrm{Re}(\alpha)^2+\mathrm{Im}(\textbf{z}_0)^2}{\mathrm{Im}(\textbf{z}_0)^2+{\lambda}^2}\approx \frac{\mathrm{Re}(\alpha)^2+\mathrm{Im}(\textbf{z}_0)^2}{\mathrm{Im}(\textbf{z}_0)^2}\geq 1+\frac{\mathrm{Re(\alpha)^2}}{\abs{\textbf{z}}^2}\gtrsim\abs{\frac{\textbf{z}-\alpha}{\textbf{z}+\lambda i}}^2,
            \end{align*}
            where we used $\mathrm{Im}(\textbf{z}_0)\geq \lambda>0$ in the second estimate and $\abs{\textbf{z}}^2\geq\mathrm{Im}(\textbf{z})^2\geq\mathrm{Im}(\textbf{z}_0)^2$ in the third estimate. Therefore $\frac{\abs{f'(z)}}{\abs{f'(z_0)}}\gtrsim 1$, as desired.
        \end{enumerate}
    \end{proof}
    \subsection{Dividing the disk into adapted horoballs and a cocompact subset}\label{subsection:division}
    Recall that to show Theorem \ref{thm:main-estimate}, we split the domain $\mathbb{D}$ into horoballs on which we use Lemma \ref{lm:f'-estimate}, and into the orbit of a compact set. We describe this splitting here, and how to estimate the integral from Theorem \ref{thm:main-estimate} over translates of a compact set in the next subsection.
    \par Let $L>0$ be the constant from Lemma \ref{lm:f'-estimate}. Choose $\ell$ arbitrarily with $0<\ell<\min\left(L, \inf_{\gamma\in\Gamma}\mathrm{dist}(0,\gamma(0))\right)$.
    Let $\mathcal{H}$ be the union over all positive parabolic $\gamma\in\Gamma$ of the (open) horoball $H_\gamma$ that is $\ell$-adapted to $\gamma$. We let $\Phi$ be the closed Dirichlet fundamental region for $\Gamma$ centered at $0$, and write $\Phi^*=\Phi\setminus\mathcal{H}$. 
    \begin{prop}\label{prop:properties-splitting}
        The set $\Phi^*$ is compact with $0$ in its interior and is a fundamental domain for the action of $\Gamma$ on $\mathbb{D}\setminus\mathcal{H}$.
    \end{prop}
    \begin{proof}
        Implicit in the statement of the proposition is the claim that $\mathcal{H}$ is $\Gamma$-invariant. This follows from uniqueness of $\ell$-adapted horoballs that was explained in Remark \ref{rmrk:adapted-horoballs-exist}. The final claim follows from this and the fact that $\Phi$ is a fundamental domain for the action of $\Gamma$ on $\mathbb{D}$.
        \par Since $\mathbb{D}/\Gamma$ has finite area, by Siegel's theorem the Dirichlet fundamental domain is a convex hyperbolic polygon \cite[Theorem 4.1.1]{katok}. It is well known that any vertex at infinity $v\in\overline{\Phi}\cap\mathbb{S}^1$ of this polygon is a fixed point of a parabolic $\gamma_v\in\Gamma$ \cite[Theorem 9.4.5 (4)]{beardon}. By replacing $\gamma_v$ with $\gamma_v^{-1}$ if necessary, we may assume that $\gamma_v$ is positive. Therefore $\mathcal{H}$ contains horoballs $H^v:=H_{\gamma_v}$ that are $\ell$-adapted to $\gamma_v$, and hence based at $v$, for each $v\in\overline{\Phi}\cap\mathbb{S}^1$. Therefore 
        \begin{align*}
            \Phi^*=\Phi\setminus\mathcal{H}\subseteq\Phi\setminus\bigcup_{v\in \overline{\Phi}\cap\mathbb{S}^1}H^v
        \end{align*}
        is bounded, and thus compact.
        \par For any positive parabolic $\gamma\in\Gamma$, since $\mathrm{dist}(0,\gamma(0))>\ell$, we have $0\not\in H_\gamma$. Moreover since $\inf_{\gamma\in\Gamma}\mathrm{dist}(0,\gamma(0))>\ell$, we have 
        \begin{align*}
            \inf_{\gamma\in\Gamma}\mathrm{dist}(0, H_\gamma)>0.
        \end{align*}
        Therefore $0$ lies in the interior of $\Phi^*=\Phi\setminus\bigcup_\gamma H_\gamma$.
    \end{proof}
    \subsection{Integral estimates on the lift of a compact part}\label{subsection:compact-piece}
    Recall that one of the steps in the proof of Theorem \ref{thm:main-estimate} is bounding $\int_{\mathbb{D}\setminus\mathcal{H}}\abs{f'}^{-q}dxdy$. We do this by splitting \[\int_{\mathbb{D}\setminus\mathcal{H}}\frac{1}{\abs{f'(z)}^q}dxdy=\sum_{\gamma\in\Gamma}\int_{\gamma\Phi^*}\frac{1}{\abs{f'(z)}^q}dxdy.\]
    Our goal in this subsection is to show how $\int_{\gamma\Phi^*}\abs{f'}^{-q}dxdy$ depends on $\gamma\in\Gamma$. This follows from the Koebe distortion theorem that guarantees that $\abs{f'}$ changes at most by a constant factor over $\gamma\Phi^*$.
    \begin{prop}\label{prop:compact-piece}
        For any $q>0$ there exists a constant $C=C(\Gamma,q)$ such that
        \begin{align*}
            \frac{1}{C}\frac{\left(1-\abs{\gamma(0)}^2\right)^2}{\abs{f'(\gamma(0))}^q}\leq\int_{\gamma\Phi^*} \frac{1}{\abs{f'(z)}^q}dxdy\leq C \frac{\left(1-\abs{\gamma(0)}^2\right)^2}{\abs{f'(\gamma(0))}^q}
        \end{align*}
        for all $\gamma\in\Gamma$.
    \end{prop}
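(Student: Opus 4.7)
The plan is to apply the Koebe distortion theorem to show that both $|f'(z)|$ and $1-|z|^2$ are comparable, up to constants depending only on $\Gamma$, to their values at $\gamma(0)$ throughout $\gamma\Phi^*$. The integral then reduces to computing the Euclidean area of $\gamma\Phi^*$, which is controlled by $(1-|\gamma(0)|^2)^2$ times the (finite) hyperbolic area of $\Phi^*$.

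First I would establish the geometric fact that $\Phi^*$ has bounded hyperbolic diameter $D=D(\Gamma)<\infty$. Since $\mathbb{D}/\Gamma$ has finite area, $\Phi$ is a finite-sided hyperbolic polygon whose only points of accumulation on $\partial\mathbb{D}$ are the parabolic fixed points of $\Gamma$. For the fixed constant $\ell$ from Lemma \ref{lm:f'-estimate} (possibly after shrinking, which only affects the final constant $C$), the $\ell$-adapted horoballs of $\mathcal{H}$ cover standard cusp neighborhoods of these parabolic fixed points by a Margulis-type argument, so $\overline{\Phi^*}$ is compact in $\mathbb{D}$. Moreover $0\in\Phi^*$, since $0$ lies in the interior of the Dirichlet region $\Phi$ and sufficiently small horoballs cannot contain $0$.

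Since $\gamma$ acts as a hyperbolic isometry, $\gamma\Phi^*$ also has hyperbolic diameter $D$, and $\gamma(0)\in\gamma\Phi^*$ is within hyperbolic distance $D$ of every $z\in\gamma\Phi^*$. Applying the Koebe distortion theorem to the univalent map $f$ gives
\[
|f'(z)|(1-|z|^2)\;\approx_\Gamma\; |f'(\gamma(0))|(1-|\gamma(0)|^2)
\]
on $\gamma\Phi^*$, with implicit constant $e^{2D}$. Applying the same statement to the identity (which is univalent) yields $1-|z|^2\approx_\Gamma 1-|\gamma(0)|^2$ on $\gamma\Phi^*$. Dividing the two relations, $|f'(z)|\approx_\Gamma |f'(\gamma(0))|$ on $\gamma\Phi^*$.

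Substituting into the integral,
\[
\int_{\gamma\Phi^*}\frac{dxdy}{|f'(z)|^q}\;\approx_{\Gamma,q}\;\frac{\mathrm{Area}_{\mathrm{euc}}(\gamma\Phi^*)}{|f'(\gamma(0))|^q}.
\]
Converting Euclidean area to hyperbolic area via $dxdy=\tfrac{(1-|z|^2)^2}{4}d\mu_{\mathrm{hyp}}$, using $1-|z|^2\approx_\Gamma 1-|\gamma(0)|^2$ on $\gamma\Phi^*$, and invoking $\mathrm{Area}_{\mathrm{hyp}}(\gamma\Phi^*)=\mathrm{Area}_{\mathrm{hyp}}(\Phi^*)$, I obtain
\[
\mathrm{Area}_{\mathrm{euc}}(\gamma\Phi^*)\;\approx_\Gamma\;\frac{(1-|\gamma(0)|^2)^2}{4}\,\mathrm{Area}_{\mathrm{hyp}}(\Phi^*).
\]
Since $\mathrm{Area}_{\mathrm{hyp}}(\Phi^*)\leq\mathrm{Area}_{\mathrm{hyp}}(\mathbb{D}/\Gamma)<\infty$ is a constant depending only on $\Gamma$, combining these estimates yields the proposition. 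The main obstacle is the geometric claim that $\overline{\Phi^*}$ is compact in $\mathbb{D}$ for the fixed $\ell$ from Lemma \ref{lm:f'-estimate}; once this is granted the rest is a mechanical application of the Koebe distortion theorem.
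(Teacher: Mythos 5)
Your argument is correct and follows essentially the same route as the paper: use compactness of $\Phi^*$ to bound the hyperbolic diameter of $\gamma\Phi^*$, apply the Koebe distortion inequality to obtain $\abs{f'(z)}\approx_\Gamma\abs{f'(\gamma(0))}$ on $\gamma\Phi^*$, and estimate $\int_{\gamma\Phi^*}dxdy\approx_\Gamma(1-\abs{\gamma(0)}^2)^2$. The only cosmetic differences are that the paper derives $1-\abs{z}^2\approx 1-\abs{\gamma(0)}^2$ and the area estimate from balls $B(0,r)\subseteq\Phi^*\subseteq B(0,R)$ rather than from applying Koebe to the identity and converting to hyperbolic area, and the paper treats the compactness of $\Phi^*$ as given rather than arguing it from cusp geometry as you do.
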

    \begin{proof}
        By Proposition \ref{prop:properties-splitting}, $\Phi^*$ is a compact set whose interior contains $0$, so we can pick radii $r=r(\Gamma)<R=R(\Gamma)$ that depend only on $\Gamma$ such that $B(0, r)\subseteq\Phi^*\subseteq B(0, R)$. Here we denote by $B(z, C)$ the hyperbolic disk centered at $z$ of radius $C$. 
        \par By the Koebe distortion theorem, we have for $z\in\gamma\Phi^*$,
        \begin{align*}
            e^{-2R(\Gamma)}\leq e^{-2\mathrm{dist}(\gamma(0), z)}\leq\abs{\frac{f'(\gamma(0))(1-\abs{\gamma(0)}^2)}{f'(z)(1-\abs{z}^2)}}\leq e^{2\mathrm{dist}(\gamma(0), z)}\leq e^{2R(\Gamma)}.
        \end{align*}   
        In particular, we have $\abs{f'(z)}(1-\abs{z}^2)\approx_\Gamma\abs{f'(\gamma(0))}(1-\abs{\gamma(0)}^2)$. 
        \par Since $\gamma\Phi^*\subseteq B(\gamma(0), R)$, and the Euclidean diameter of the hyperbolic disk $B(\gamma(0), R)$ is $R(1-\abs{\gamma(0)}^2)$ up to a bounded multiplicative error, on $\gamma\Phi^*$, we have $1-\abs{z}^2\approx 1-\abs{\gamma(0)}^2$. In particular, $\abs{f'(z)}\approx_\Gamma \abs{f'(\gamma(0))}$ for $z\in\gamma\Phi^*$. 
        \par Therefore 
        \begin{align*}
            \int_{\Phi^*} \frac{1}{\abs{f'(z)}^q}dxdy\approx_{\Gamma,q} \frac{1}{\abs{f'(\gamma(0))}^q} \int_{\gamma\Phi^*}dxdy.
        \end{align*}
        The region $\gamma\Phi^*$ contains a hyperbolic disk of radius $r$, and hence a Euclidean disk of radius $r'\approx r(1-\abs{\gamma(0)}^2)$. Therefore 
        \begin{align*}
            r^2(1-\abs{\gamma(0)}^2)^2\lesssim\int_{\gamma\Phi^*}dxdy\lesssim R^2 (1-\abs{\gamma(0)}^2)^2,
        \end{align*}
        and hence $\int_{\gamma\Phi^*}dxdy\approx_\Gamma (1-\abs{\gamma(0)}^2)^2$. Hence
        \begin{align*}
            \int_{\Phi^*} \frac{1}{\abs{f'(z)}^q}dxdy\approx_{\Gamma,q} \frac{\left(1-\abs{\gamma(0)}^2\right)^2}{\abs{f'(\gamma(0))}^q}.
        \end{align*} 
    \end{proof}
    \subsection{Proof of Theorem \ref{thm:main-estimate}}\label{subsection:pf-of-thm}
    We note that $f(\gamma(z))=\rho(\gamma)(f(z))$. Differentiating at $0$, we see that $f'(\gamma(0))\gamma'(0)=\rho(\gamma)'(0) f'(0)$. Therefore 
    \begin{align}\label{eq:f'-equiv}
        f'(\gamma(0))=f'(0)\frac{\rho(\gamma)'(0)}{\gamma'(0)}.
    \end{align} 
    \par Since $\gamma$ is a M\"obius transformation of the disk, we can write it as $\gamma(z)=\lambda \frac{z-a}{1-\bar{a}z}$, where $a\in\mathbb{D}$ and $\abs{\lambda}=1$. Therefore 
    \begin{align}\label{eq:gamma'}
        \abs{\gamma'(0)}=1-\abs{a}^2=1-\abs{\gamma(0)}^2.
    \end{align}
    Using (\ref{eq:f'-equiv}), (\ref{eq:gamma'}) and Proposition \ref{prop:compact-piece}, we find that 
    \begin{align}\label{eq:combined-compact}
        \int_{\mathbb{D}\setminus\mathcal{H}} \frac{1}{\abs{f'(z)}^q}dxdy=\sum_{\gamma\in\Gamma}\int_{\gamma\Phi^*}\frac{1}{\abs{f'(z)}^q}dxdy\approx_{\Gamma,q}\sum_{\gamma\in\Gamma} \frac{\abs{\gamma'(0)}^{q+2}}{\abs{\rho(\gamma)'(0)}^q}.
    \end{align}
    \par For any positive parabolic $\gamma\in\Gamma$, by Proposition \ref{prop:properties-splitting} we have $0\not\in H_\gamma$. Fix such a $\gamma\in\Gamma$, and let $z_0$ be the anchor of $H_\gamma$. Then $z_0\in\gamma\Phi^*$ for some $\gamma\in\Gamma$. Hence by Corollary \ref{cor:estimate-horoball-integral} we have 
    \begin{align*}
        \int_{H_\gamma} \frac{1}{\abs{f'(z)}^q}dxdy\lesssim_q \frac{\left(1-\abs{z_0}^2\right)^2}{\abs{f'(z_0)}^q}.
    \end{align*}
    Since $\gamma\Phi^*$ has hyperbolic diameter bounded over $\gamma\in\Gamma$, we have by the Koebe distortion theorem 
    \begin{align*}
        \abs{\frac{f'(\gamma(0))(1-\abs{\gamma(0)}^2)}{f'(z_0)(1-\abs{z_0}^2)}}\approx_\Gamma 1.
    \end{align*}
    We also have as in the proof of Proposition \ref{prop:compact-piece}, 
    \begin{align*}
        1-\abs{z_0}^2\approx_{\Gamma} 1-\abs{\gamma(0)}^2.
    \end{align*}
    Therefore 
    \begin{align*}
        \int_{H_\gamma} \frac{1}{\abs{f'(z)}^q}dxdy\lesssim_{\Gamma,q} \frac{\left(1-\abs{\gamma(0)}^2\right)^2}{\abs{f'(\gamma(0))}^q}\approx_{\Gamma,q} \int_{\gamma\Phi^*} \frac{1}{\abs{f'(z)}^q}dxdy.
    \end{align*}
    Since any fundamental region $\gamma\Phi^*$ intersects at most a bounded number of horoballs in $\mathcal{H}$, we see that 
    \begin{align}\label{eq:combined-horoball}
        \int_{\mathcal{H}}\frac{1}{\abs{f'(z)}^q}dxdy\lesssim_{\Gamma,q} \int_{\mathbb{D}\setminus\mathcal{H}} \frac{1}{\abs{f'(z)}^q}dxdy.
    \end{align}
    Combining (\ref{eq:combined-compact}) and (\ref{eq:combined-horoball}), we see that 
    \begin{align*}
        \int_{\mathbb{D}}\frac{1}{\abs{f'(z)}^q}dxdy\approx_{\Gamma,q}\int_{\mathbb{D}\setminus\mathcal{H}} \frac{1}{\abs{f'(z)}^q}dxdy\approx_{\Gamma,q}\sum_{\gamma\in\Gamma} \frac{\abs{\gamma'(0)}^{q+2}}{\abs{\rho(\gamma)'(0)}^q}.
    \end{align*}
    \section{Grafting a once-punctured torus}\label{section:grafting}
    In this section and in the rest of the paper, denote by $\Omega$ the domain from \cite{komori}, and by $\Gamma$ the Kleinian once-punctured torus group that acts on $\Omega$. Here we give a construction of $\Omega$ and $\Gamma$.
    \par In \cite{komori}, $\Omega$ is obtained by starting with a hyperplane in $\mathbb{H}^3$ and bending it along a certain discrete set of geodesics. The result of this is a set in $\mathbb{H}^3$ that disconnects $\partial\mathbb{H}^3=\mathbb{S}^2$, and $\Omega$ is one of the resulting connected components. Since here we do not work with hyperbolic space $\mathbb{H}^3$, we give a concrete description of $\Omega$ as the union of regions in the complex plane bounded by 4 or 2 circular arcs. 
    \par In this section we work in the upper half-plane, closely following \cite[\S 2]{komori}. We denote the M\"obius transformations and subsets of $\mathbb{H}$ with a bar to distinguish them from their counterparts in the disk model.
    \par We start with a once-punctured torus group generated by M\"obius transformations
    \begin{align*}
        \bar{A}=\begin{pmatrix}
            \cosh \frac{\lambda}{2} & \cosh\frac{\lambda}{2} + 1\\
            \cosh\frac{\lambda}{2} - 1& \cosh\frac{\lambda}{2}
        \end{pmatrix},\\
        \bar{B}=\begin{pmatrix}
            \cosh\frac{\tau}{2}\coth\frac{\lambda}{4} & -\sinh\frac{\tau}{2}\\
            -\sinh\frac{\tau}{2} & \cosh\frac{\tau}{2}\tanh\frac{\lambda}{4}
        \end{pmatrix}.
    \end{align*}
    When $\lambda$ and $\tau$ are real, the group $\langle \bar{A},\bar{B}\rangle$ acts on $\mathbb{H}$ and $\frac{\mathbb{H}}{\langle \bar{A},\bar{B}\rangle}$ is a once-punctured torus. We fix $\lambda=2\cosh^{-1}\frac{3}{2}$ throughout, and we let $\tau=\frac{\lambda}{2}+i\theta$ with $\theta$ a real parameter, to be set later. We denote by $\bar{B}(\theta)$ the M\"obius transformation obtained by setting $\tau=\frac{\lambda}{2}+i\theta$ in the formula above, and set $\bar{\Gamma}(\theta)=\langle \bar{A}, \bar{B}(\theta)\rangle$. We write $\bar{\Gamma}=\bar{\Gamma}(0)$. Define a natural homomorphism $\bar{f}_\theta:\bar{\Gamma}\to\bar{\Gamma}(\theta)$ by $\bar{f}_\theta(\bar{A})=\bar{A}, \bar{f}_\theta(\bar{B}(0))=\bar{B}(\theta)$.
    \par Let $\Phi$ be the ideal quadrilateral with vertices $\pm\tanh\frac{\lambda}{4}, \pm\coth\frac{\lambda}{4}$, so that $\Phi$ is a fundamental domain for the action of $\bar{\Gamma}$ on $\mathbb{H}$. Let $\Sigma$ be the union of axes of all elements of $\bar{\Gamma}$. Then $\mathbb{H}\setminus\Sigma$ is a disjoint union of $\gamma \mathrm{Int}(\Phi)$, where $\mathrm{Int}$ denotes the topological interior. 
    \par Define a map $\bar{\psi}_\theta:\mathbb{H}\setminus\Sigma\to\mathbb{S}^2=\mathbb{C}\cup\{\infty\}$ by 
    \begin{align*}
        \bar{\psi}_\theta(z)=\bar{f}_\theta(\gamma)(\gamma^{-1}z),
    \end{align*} 
    where $\gamma\in\bar{\Gamma}$ is the unique element with the property that $z\in \gamma \mathrm{Int}(\Phi)$.
    \par For any hyperbolic element $\gamma\in\bar{\Gamma}$, the axis of $\gamma$ is a boundary of exactly two (ideal) quadrilaterals $\eta_1\Phi,\eta_2\Phi$ of the form $\eta\Phi$ for $\eta\in\bar{\Gamma}$. Order $\eta_1,\eta_2$ so that $\eta_1^{-1}\eta_2\in\{\bar{A},\bar{B}\}$. Then $\bar{\psi}_\theta(\eta_2\Phi)=\bar{f}_\theta(\eta_1)\bar{f}_\theta(\eta_1^{-1}\eta_2)\Phi$ and $\bar{\psi}_\theta(\eta_1\Phi)=\bar{f}_\theta(\eta_1)\Phi$. Since $\bar{f}_\theta(\bar{A})=\bar{A}$ we can extend $\bar{\psi}_\theta$ over all axes in $\Sigma$ that separate $\eta_1\Phi$ and $\eta_1 \bar{A}\Phi$. Denote the union of all other axes in $\Sigma$ by $\Sigma_B$. For $\theta<0$ there is a region $G_\theta$ between $\Phi$ and $\bar{f}_\theta(B)\Phi$ bounded by two circular arcs (see Figure \ref{fig:gap}).    
    \begin{figure}
        \begin{center}
            \includegraphics[width=\textwidth, keepaspectratio]{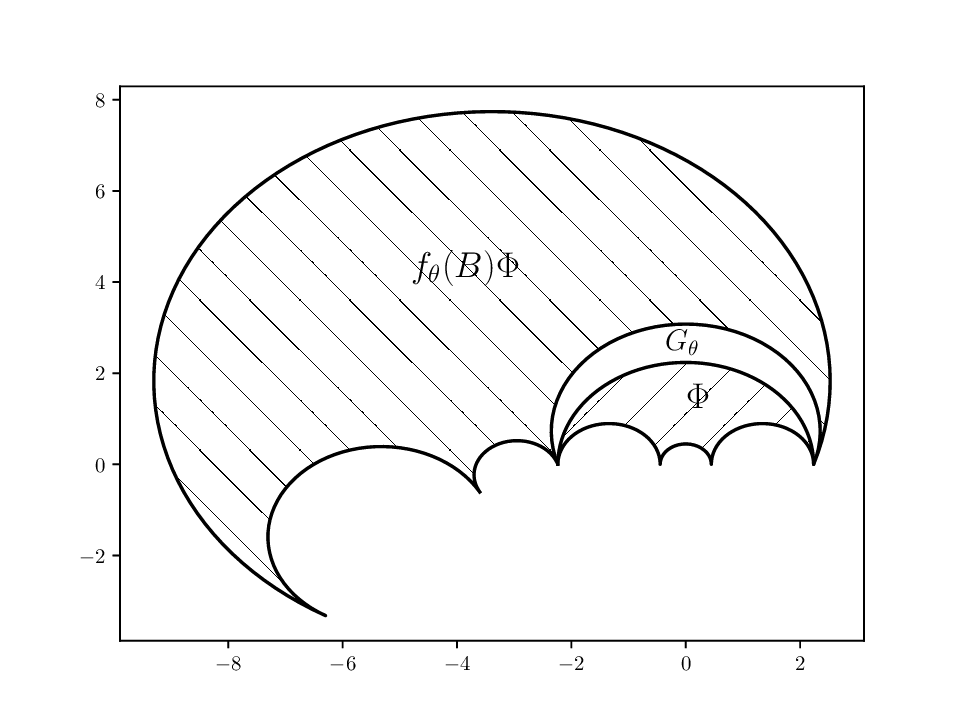}
        \end{center}
        \caption{The gap $G_\theta$ between $\Phi=\bar{f}_\theta(\mathrm{id})\Phi$ and $\bar{f}_\theta(B)\Phi$ in the image of $\bar{\psi}_\theta$. }\label{fig:gap}
    \end{figure} 

    \begin{align*}
        \bar{\Omega}(\theta)=\bar{\psi}_\theta(\mathbb{H}\setminus\Sigma_B)\cup\bigcup_{\gamma\in\bar{\Gamma}(\theta)} \gamma G_\theta.
    \end{align*}
    Then $\bar{\Omega}(\theta)$ is simply connected and $\bar{\Gamma}(\theta)$-invariant.
    \par Fix $\eta(z)=i\frac{1+z}{1-z}$ to be a M\"obius transformation that maps $\mathbb{D}$ to $\mathbb{H}$. Set $\theta_0=\arccos\frac{1}{9}-\pi$. 
    \begin{dfn}\label{dfn:omega-gamma}
        We define $\Omega=\eta^{-1}(\bar{\Omega}(\theta_0))$, and $\Gamma=\eta^{-1}\circ\bar{\Gamma}(\theta_0)\circ\eta$.
    \end{dfn}
    We also set $f(\gamma)=\eta^{-1}\circ\bar{f}_{\theta_0}(\gamma)\circ\eta$ and $\psi=\eta^{-1}\circ\bar{\psi}_{\theta_0}$.
    A picture of $\Omega$ can be found in Figure \ref{fig:domain}.
    \begin{figure}
        \begin{center}
        \includegraphics[width=\textwidth, keepaspectratio]{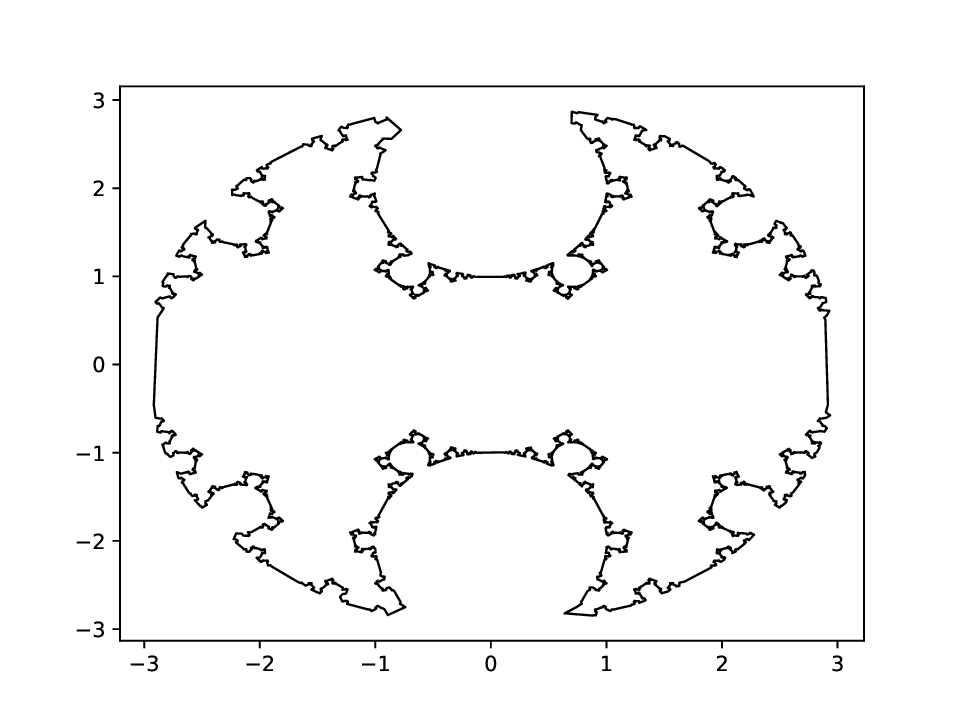}
        \end{center}
        \caption{Domain $\Omega$.}\label{fig:domain}
    \end{figure}
    \section{Numerical results}\label{section:methods}
    Let $F:\Omega\to\mathbb{D}$ be the Riemann map and $f=F^{-1}$ be its inverse. We denote by $\Gamma_0=F\circ\Gamma \circ F^{-1}$ and by $\rho:\Gamma_0\to\Gamma$ the homomorphism induced by $f$. 
    \par By a simple change of coordinates we see that 
    \begin{align}\label{eq:change-of-coords}
        \int_\Omega \abs{F'}^p dxdy=\int_\mathbb{D}\abs{f'}^{2-p}dxdy.
    \end{align}
    By Theorem \ref{thm:main-estimate} and (\ref{eq:change-of-coords}), for $p>2$ we have $F'\in L^{p}(\Omega)$ if and only if 
    \begin{align}\label{eq:main-series}
        \sum_{\gamma\in\Gamma_0} \frac{\left|\gamma'(0)\right|^{p}}{\abs{\rho(\gamma)'(0)}^{p-2}} <\infty.
    \end{align}
    In this section we describe how we estimate $\rho$ and $\Gamma_0$, and how we estimate the range of values of $p$ where the series (\ref{eq:main-series}) converges. \par Since $\Gamma$ is a free group of rank $2$ generated by $A$ and $B$ (see \S\ref{section:grafting}), it suffices to estimate $\rho^{-1}(A)$ and $\rho^{-1}(B)$ for generators $A$ and $B$ of $\Gamma$. As explained in \S\ref{section:grafting}, the M\"obius transformations $A$ and $B$ are given explicitly. To estimate $\rho^{-1}(A)=f^{-1}\circ A\circ f$ and $\rho^{-1}(B)=f^{-1}\circ B\circ f$, we estimate the Riemann mapping $f:\mathbb{D}\to\Omega$ using Driscoll's Schwarz-Christoffel toolbox \cite{driscoll-sc-0,driscoll-sc}, and then compute $f^{-1}\circ A\circ f$ and $f^{-1}\circ B\circ f$. 
    \par In \S\ref{subsec:polygonal} we describe how we get a polygon $\hat{\Omega}$ that is an approximation to $\Omega$. Then in \S\ref{subsec:est-conj-gen} we explain how we get estimates of $\rho^{-1}(A)$ and $\rho^{-1}(B)$ using this polygon and the Schwarz-Christoffel toolbox. Finally in \S\ref{subsec:vals} we describe how we show numerically that the sum $\sum_{\gamma\in\Gamma_0}\left|\gamma'(0)\right|^{p} \left|\rho(\gamma)'(0)\right|^{2-p}$ converges for $p=4$, and we check for which values of $p$ it diverges. 
    \subsection{Finite polygonal approximations}\label{subsec:polygonal}
    We first remind the reader of some notation from \S\ref{section:grafting}. Recall that we have a once-punctured torus group of M\"obius transformations $\bar{\Gamma}$ acting on $\mathbb{H}$, a map $\psi:\mathbb{H}\to\Omega$, and a group isomorphism $f:\bar{\Gamma}\to\Gamma$, so that $\psi$ is $f$-equivariant. Recall that $\eta=i\frac{1+z}{1-z}:\mathbb{H}\to\mathbb{D}$ is the M\"obius transformation we use to move from the upper half-plane model to the disk model.
    \par Since $\mathbb{D}/\bar{\Gamma}$ is a once-punctured torus, the limit set $\Lambda(\bar{\Gamma})$ of $\bar{\Gamma}$ is $\mathbb{S}^1$. Let $x_0=\eta(\coth\frac{\lambda}{4})=i\frac{1+\coth\frac{\lambda}{4}}{1-\coth\frac{\lambda}{4}}=-ie^{\lambda/2}$ be the $\eta$-image of an ideal vertex of the fundamental domain $\Phi$ for the action of $\bar{\Gamma}$ on $\mathbb{H}$. It is easy to see that $\bar{\Gamma}x_0$ is dense in $\mathbb{S}^1$. From the construction of $\psi$, we see that it extends continuously to a surjection $\psi:\mathbb{S}^1\to\partial\Omega$. Therefore $\psi(\bar{\Gamma}x_0)$ is dense in $\partial\Omega$ and by equivariance $\psi(\bar{\Gamma} x_0)=\Gamma\psi(x_0)=\Gamma x_0$.
    \par To obtain an approximation to $\partial\Omega$, we generate some number of random points $v_1,v_2,...,v_n$ of the form $\gamma x_0$ where $\gamma$ is a word in $A$ and $B$ of length at most some parameter $\ell>0$. These points determine the boundary of a polygon $\hat{\Omega}$ which is an approximation to $\Omega$.
    \par Our goal to use the Schwarz-Christoffel toolbox to compute the Riemann map of this polygon. For this we need the vertices $v_i$ to be sorted, which will almost never happen. To sort them, we note that it suffices to sort $\psi^{-1}(v_i)$. If $v_i=\gamma_ix_0$, then $\psi^{-1}(v_i)=f^{-1}(\gamma_i)\left(\coth\frac{\lambda}{4}\right)\in\mathbb{R}$ by equivariance.This formula is used to compute $\psi^{-1}(v_i)\in\mathbb{R}$.
    \par The content of this subsection is summarized in Procedure \ref{algo:polygon}. In this procedure, we use the following additional piece of notation. For a word $w\in \langle a,b\rangle$ in the free group $\langle a,b\rangle$ of rank $2$, and elements $x,y\in G$ in some group $G$, denote by $w(a,b)$ the element of $G$ obtained by substituting $x$ for $a$ and $y$ for $b$ in $w$.
    \begin{algorithm}[!ht]
        \small
        \DontPrintSemicolon
        \KwInput{The number of vertices $n$, and the maximum word length $\ell$.}
        \KwOutput{The vertices $v_1,v_2,...,v_n$ of a polygon.}
        \caption{Constructing finite polygonal approximations.}\label{algo:polygon}
        Construct a list $W_1,W_2,...,W_{2(3^\ell-1)}$ of all words of length at most $\ell$ in the free group $\langle a,b\rangle$ of rank $2$.\\
        Pick integers $1\leq i_1<i_2<...<i_n\leq 2(3^\ell-1)$ uniformly at random.\\
        Set $\gamma_k=W_{i_k}(A, B)$ and $\bar{\gamma}_k=W_{i_k}(\bar{A}, \bar{B})$.\\
        Reorder $i_1,i_2,...,i_n$ so that $\bar{\gamma}_1\left(\coth\frac{\lambda}{4}\right)<\bar{\gamma}_2\left(\coth\frac{\lambda}{4}\right)<...<\bar{\gamma}_n\left(\coth\frac{\lambda}{4}\right)$.\\
        Let $v_k=\gamma_k(-i\exp(\lambda/2))$.
    \end{algorithm}
    \subsection{Procedure for estimating conjugated generators}\label{subsec:est-conj-gen}
    Here we describe the method we use to estimate $\rho^{-1}(A)=f^{-1}\circ A\circ f$ and $\rho^{-1}(B)=f^{-1}\circ B\circ f$, assuming that we have an estimate of $f:\mathbb{D}\to\Omega$. We assume that we can compute $f(z)$ and $f^{-1}(w)$ for any $z\in\mathbb{D}, w\in\Omega$, which is the case for Riemann mappings computed using the Schwarz-Christoffel toolbox.
    \par More generally, we describe how to estimate $f^{-1}\circ X\circ f$ for any M\"obius transformation $X$ preserving $\Omega$. Since $X$ preserves $\Omega$, its conjugate $f^{-1}\circ X\circ f$ preserves the disk. It therefore takes the form 
    \begin{align*}
        f^{-1}\circ X\circ f=\lambda\frac{z-a}{1-\bar{a}z}.
    \end{align*}
    \par Our methods estimate $\lambda\in\mathbb{S}^1$ and $a\in\mathbb{D}$. They are summarized in Procedure \ref{algo:est-gen}. Note that we can compute $f$ and $f^{-1}$ at specific points. We generate some number $n$ of points $z_1,z_2,...,z_n\in\mathbb{D}$ and then compute $w_i=f^{-1}(X(f(z_i)))$. We estimate $\lambda,a$ so that $\lambda\frac{z_i-a}{1-\bar{a}z_i}$ is as close as possible to $w_i$. Specifically, we use the sum-of-squares error 
    \begin{align*}
        \sum_{i=1}^n \abs{w_i-\lambda\frac{z_i-a}{1-\bar{a}z_i}}^2.
    \end{align*}
    In our implementation, we use MATLAB's \texttt{lsqcurvefit} to do the optimization.
    \begin{algorithm}[!ht]
        \small
        \DontPrintSemicolon
        \KwInput{The Riemann mapping $f:\mathbb{D}\to\Omega$, a M\"obius transformation $X$ preserving $\Omega$, and the sample size $n$.}
        \KwOutput{Estimate $\hat{X}$ of the M\"obius transformation $f^{-1}\circ X\circ f$.}
        \caption{Estimating $f^{-1}\circ X\circ f$ given the Riemann mapping $f:\mathbb{D}\to\Omega$ and a M\"obius transformation $X$ preserving $\Omega$.}\label{algo:est-gen}
        Generate $n$ points $z_1,z_2,...,z_n\in\mathbb{D}$ uniformly at random.\\
        Compute $w_i=f^{-1}(X(f(z_i)))$.\\
        Look for the minimizers $(\hat{\lambda},\hat{a})$ of 
        \begin{align*}
            \sum_{i=1}^n \abs{w_i-\lambda \frac{z_i-a}{1-\bar{a}z_i}}^2
        \end{align*}
        over $(\lambda, a)\in\mathbb{S}^1\times\mathbb{D}$.\\
        Set $\hat{X}=\hat{\lambda}\frac{z-\hat{a}}{1-\overline{\hat{a}}z}$.
    \end{algorithm}
    \subsection{Estimates of the conjugated generators}
    To actually estimate generators of $\Gamma_0=f^{-1}\circ\Gamma\circ f$, we combine the procedures described in \S\ref{subsec:polygonal} and \S\ref{subsec:est-conj-gen}.
    \par We first compute polygonal approximations to $\Omega$ using Procedure \ref{algo:polygon}. We set the maximum word length $\ell=12$. We construct the following approximations:
    \begin{enumerate}
        \item the main polygon, $\hat{\Omega}$ that has $n=600$ vertices, and
        \item several smaller polygons, $\hat{\Omega}^\text{s}_i$ for $1\leq i\leq 20$, each with $n=100$ vertices.
    \end{enumerate}
    We will use $\hat{\Omega}$ to obtain estimates of $f^{-1}\circ A\circ f$ and $f^{-1}\circ B\circ f$, and the smaller polygons $\hat{\Omega}^\text{s}_i$ to validate these estimates.
    \par Using the Schwarz-Christoffel toolbox, we compute the Riemann mappings $f:\mathbb{D}\to\hat{\Omega}$ and $f_i^\text{s}:\mathbb{D}\to\hat{\Omega}^\text{s}_i$. We use Procedure \ref{algo:est-gen} with $f$, and $X=A$ and $X=B$. We denote the resulting estimates $\lambda^A, a^A$ and $\lambda^B, a^B$, and call them the main estimates in this subsection. These give the estimates of $f^{-1}\circ A\circ f$ and $f^{-1}\circ B\circ f$ that we will use in the rest of the paper.
    \par For validation, we also run Procedure \ref{algo:est-gen} with $\hat{\Omega}^\text{s}_i$ for $i=1,2,...,20$, and $X=A$ and $X=B$. We call the resulting estimates $\lambda_i^A, a_i^A$ and $\lambda_i^B, a_i^B$. We plot these and the main estimates in Figures \ref{verify-a} and \ref{verify-b}. We see that for each parameter, there is a clear cluster in the estimates obtained from $\hat{\Omega}_i^\text{s}, i=1,2,...,20$ centered approximately at the estimate obtained from $\hat{\Omega}$. This suggests that $\lambda^A,a^A$ and $\lambda^B, a^B$ are accurate. 
    \begin{dfn}
        We set $\hat{\Gamma}_0=\langle \hat{A}, \hat{B}\rangle$, where 
        \begin{align*}
            \hat{A}(z)=\lambda^A \frac{z-a^A}{1-\overline{a^A}z},
        \end{align*}
        and 
        \begin{align*}
            \hat{B}(z)=\lambda^B\frac{z-a^B}{1-\overline{a^B}z}.
        \end{align*}
        We also define the homomorphism $\hat{\rho}:\hat{\Gamma}_0\to\Gamma$ by setting $\hat{\rho}(\hat{A})=A$ and $\hat{\rho}(\hat{B})=B$.
    \end{dfn}
    \begin{figure}
        \includegraphics[width=\textwidth, keepaspectratio]{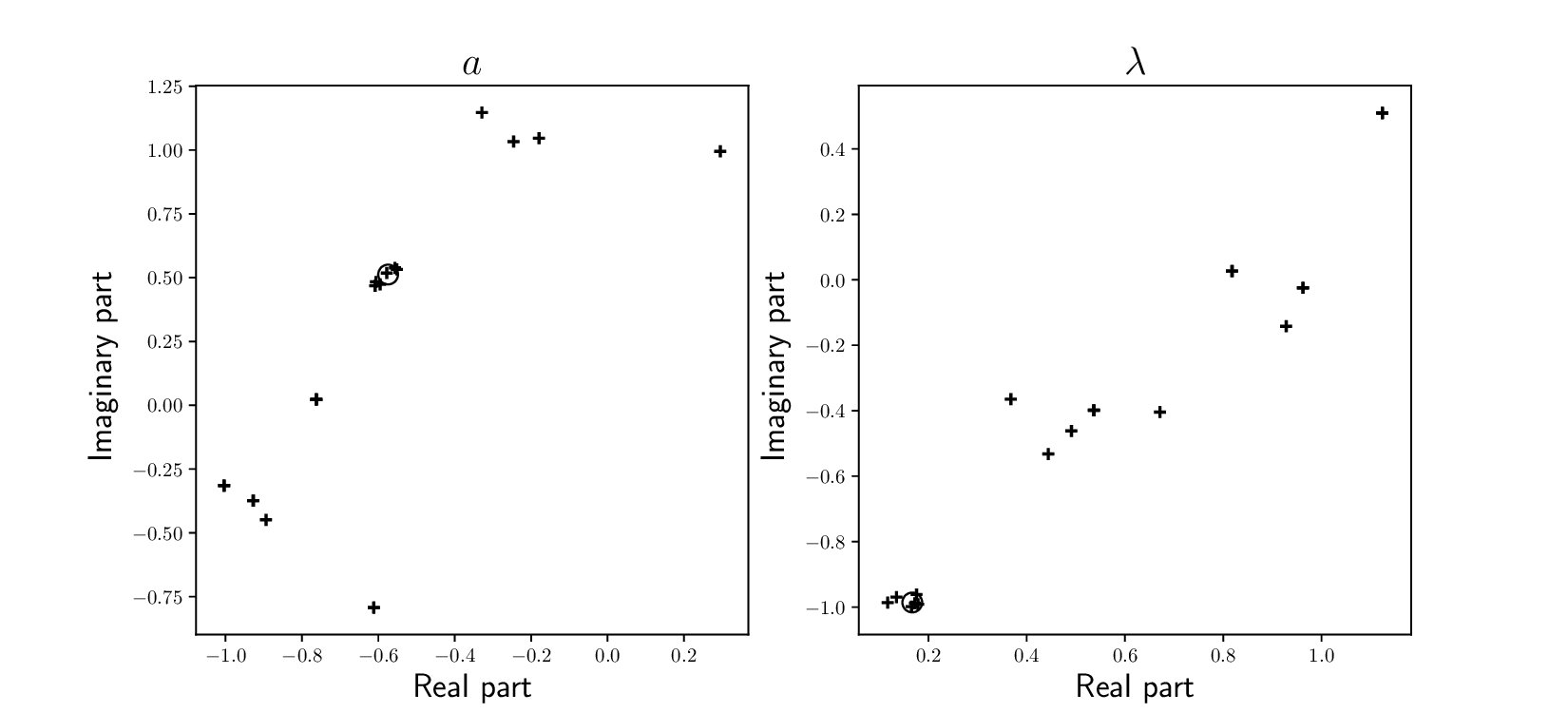}
        \caption{The crosses represent values of parameters $\lambda^A_i, a^A_i$ of a disk automorphism estimated using smaller $100$-point polygonal approximations $\hat{\Omega}_i^\text{s}$ to $\Omega$. The circles are the estimates of $\lambda^A$ and $a^A$ obtained from $\hat{\Omega}$. }\label{verify-a}
    \end{figure}
    \begin{figure}
        \includegraphics[width = \textwidth, keepaspectratio]{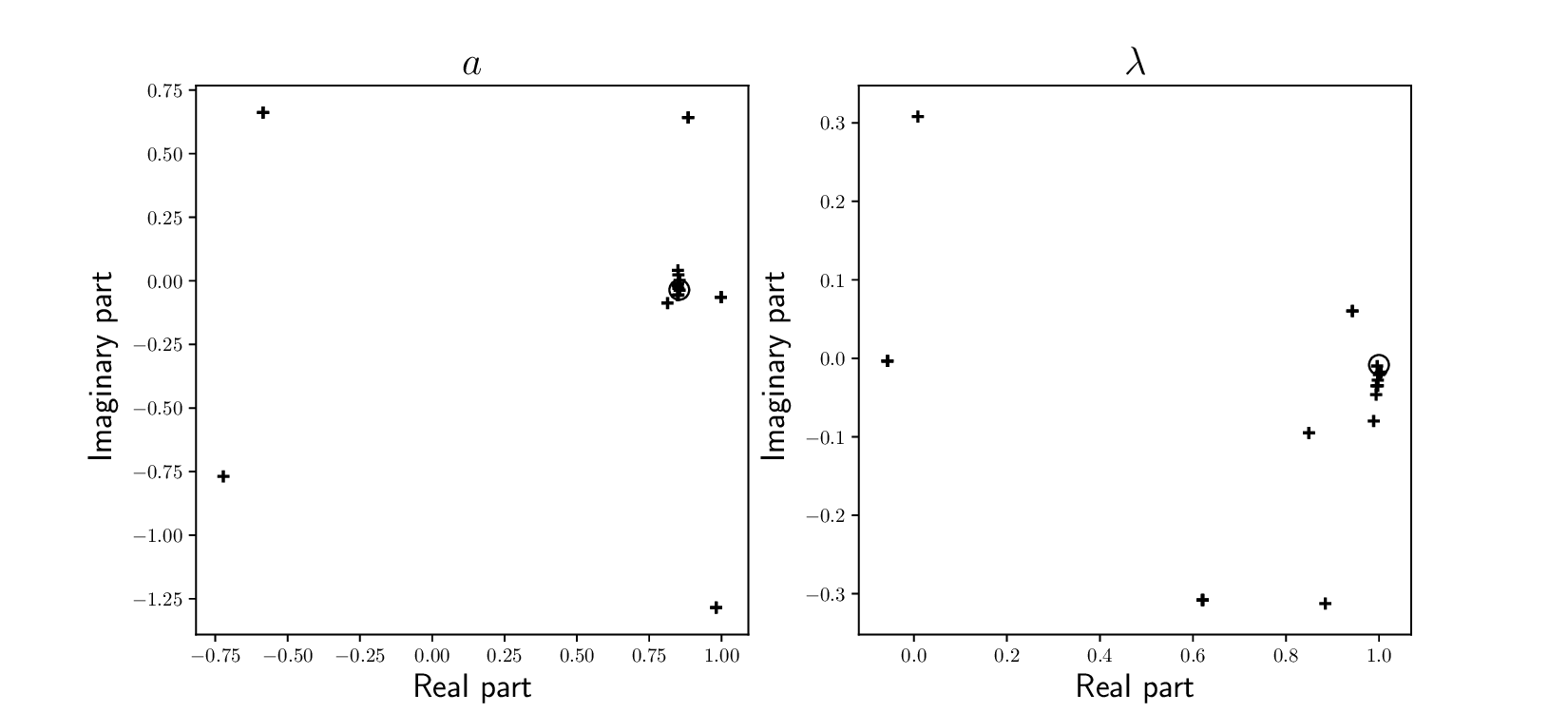}
        \caption{The crosses represent values of parameters $\lambda^B_i, a^B_i$ of a disk automorphism estimated using smaller $100$-point polygonal approximations $\hat{\Omega}_i^\text{s}$ to $\Omega$. The circles are the estimates of $\lambda^B$ and $a^B$ obtained from $\hat{\Omega}$. }\label{verify-b}
    \end{figure} 
    \subsection{Values of $p$ for which $F'\in L^p(\Omega)$} 
    \label{subsec:vals}
    By Theorem \ref{thm:main-estimate} and (\ref{eq:change-of-coords}), $F'\in L^p(\Omega)$ if and only if
    \begin{align*}
        \sum_{\gamma\in\Gamma_0} \left|\gamma'(0)\right|^{p}\left| \rho(\gamma)'(0)\right|^{2-p}<\infty.
    \end{align*} We are hence interested in values of $p$ for which 
    \begin{align*}
        \sum_{\gamma\in \hat{\Gamma}_0} \left\lvert \gamma'(0)\right\rvert^p \left\lvert \hat{\rho}(\gamma)'(0)\right\rvert^{2-p}<\infty.
    \end{align*}
    We define $\hat{\Gamma}_0^n$ to be the set of all elements that have word length $n$ in the generators $\hat{A},\hat{B}$ of $\hat{\Gamma}_0$. We then split the above sum into 
    \begin{align*}
        \sum_{n\geq 0} S_n^{(p)}\text{ where }S_n^{(p)}=\sum_{\gamma\in\hat{\Gamma}_0^n} \left|\gamma'(0)\right|^p\left|\hat{\rho}(\gamma)'(0)\right|^{2-p}.
    \end{align*}
    We numerically confirm that $S_n^{(4)}$ decays exponentially with $n$, see Figure \ref{fig:result_4}.
    \begin{figure}
        \includegraphics[width = \textwidth, keepaspectratio]{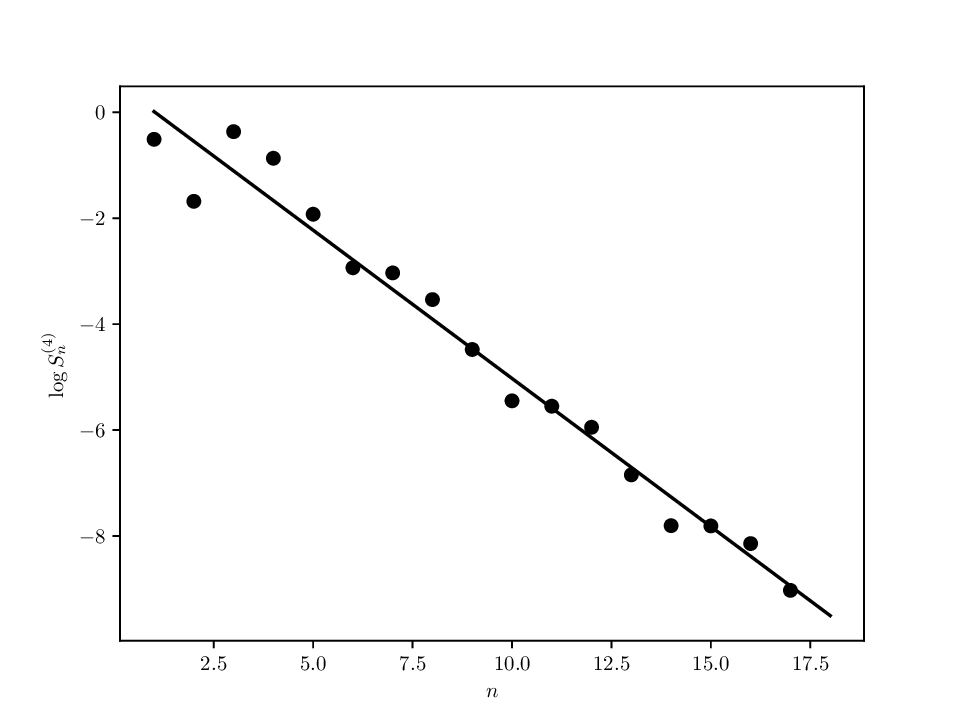}
        \caption{The plot of $\log S_n^{(4)}$ as a function of $n$ for $n\leq 17$. The solid line is the least-squares best linear fit to the shown data points. }\label{fig:result_4}
    \end{figure}
    \par Recall the definition of $p_\star$ from the Main result in \S\ref{subsec:results}, $p_\star=\sup\{p>0:F'\in L^p(\Omega)\}$. By Theorem \ref{thm:main-estimate} and (\ref{eq:change-of-coords}), this is equal to 
    \begin{align*}
        p_\star&=\sup\left\{p>0:\sum_{n\geq 0}S_n^{(p)}<\infty\right\}.
    \end{align*}
    How close $p_\star$ is to $4$ gives an indication of how close the domain $\Omega$ is to being a counterexample to Brennan's conjecture. Numerical results show that $S_n^{(5.52)}$  decays exponentially with $n$ and that $S_n^{(5.54)}$ grows exponentially with $n$, see Figure \ref{fig:result_bound}, meaning that $5.52<p_\star<5.54$, showing the Main result. \begin{figure}
        \includegraphics[width=\textwidth, keepaspectratio]{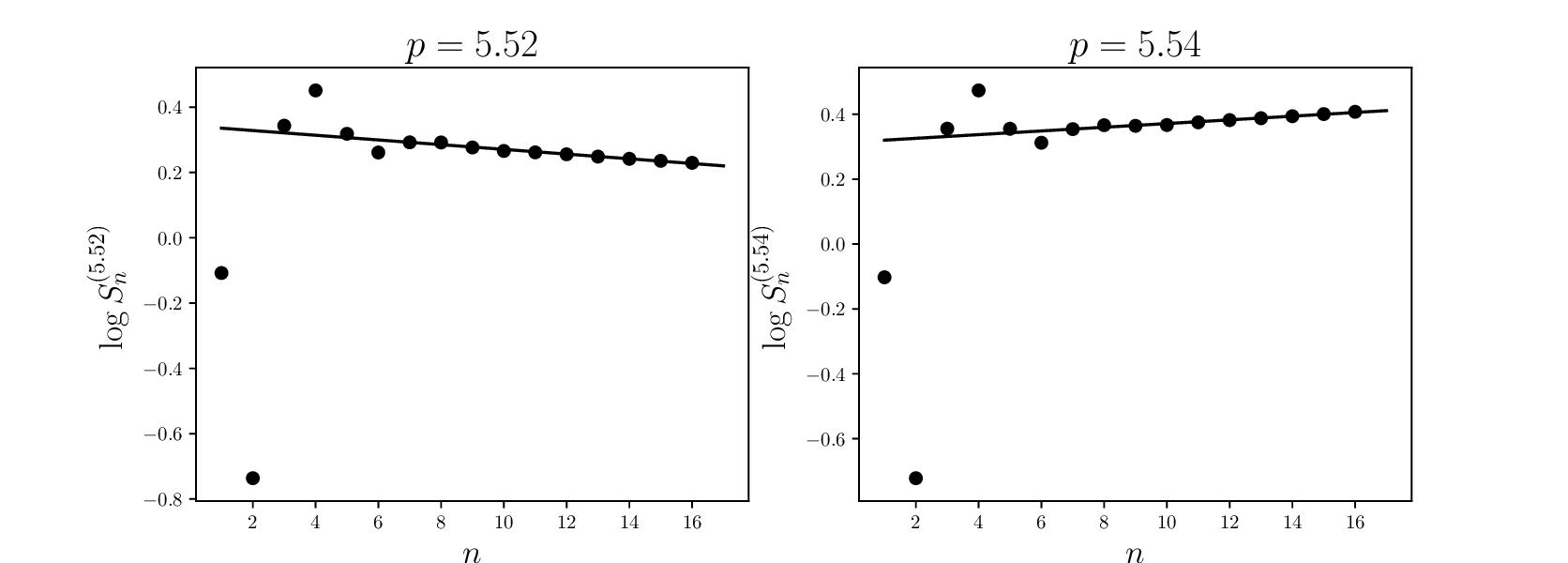}
        \caption{The plots of $\log S_n^{(p)}$ as a function of $n$ for $n\leq 16$ and $p=5.52, 5.54$. The solid lines are the least-squares best linear fits to the data points $(n, \log S_n^{(p)})$ for $n\geq 6$, for $p=5.52, 5.54$. }\label{fig:result_bound}
    \end{figure}
    \FloatBarrier
    
\end{document}